\numberwithin{equation}{section}
\newcommand{\LL}{{{\cal L}_\rho}}
\newcommand{\Lflat}{{{\cal L}^{\rm flat}_\rho}}
\newcommand{\Or}{\mathcal{O}}
\newcommand{\Ai}{\mathrm{Ai}}
\newcommand{\Pb}{\mathbb{P}}
\newcommand{\E}{\mathbbm{E}}
\newcommand{\Id}{\mathbbm{1}}
\newcommand{\e}{\varepsilon}
\newcommand{\R}{\mathbb{R}}
\newcommand{\Z}{\mathbb{Z}}
\newtheorem{prop}{Proposition}[section]
\newtheorem{thm}[prop]{Theorem}
\newtheorem{lem}[prop]{Lemma}
\newtheorem{cor}[prop]{Corollary}
\newtheorem{cla}[prop]{Claim}
\newtheorem{rem}[prop]{Remark}
\newenvironment{remark}{\begin{rem}\normalfont}{\end{rem}}
\title{Universality of the GOE Tracy-Widom distribution for TASEP with arbitrary\\ particle density}
\author{P.L. Ferrari\thanks{Institute for Applied Mathematics, Bonn University, Endenicher Allee 60, 53115 Bonn, Germany. E-mail: {\tt ferrari@uni-bonn.de}}
\and  A. Occelli\thanks{Institute for Applied Mathematics, Bonn University, Endenicher Allee 60, 53115 Bonn, Germany. E-mail: {\tt occelli@iam.uni-bonn.de}}}
\date{April 24, 2018}
\begin{document}
\sloppy
\maketitle

\begin{abstract}
We consider TASEP in continuous time with non-random initial conditions and arbitrary fixed density of particles $\rho\in (0,1)$. We show GOE Tracy-Widom universality of the one-point fluctuations of the associated height function. The result phrased in last passage percolation language is the universality for the point-to-line problem where the line has an arbitrary slope.
\end{abstract}

\section{Introduction}
We consider the totally asymmetric simple exclusion process (TASEP) in continuous time on $\Z$. It is an interacting particle system with the constraint that there is at most one particle per site. Particles jump to their right-neighboring site with rate $1$, provided the arrival site is empty. A very natural and important observable is the integrated current at (for example) the origin, that is,
\begin{equation}
J(t)=\#\,\textrm{ particles which jumped from site }0\textrm{ to site }1\textrm{ during time }[0,t].
\end{equation}

TASEP is a model in the Kardar-Parisi-Zhang (KPZ) universality class and thus one expects that for some model-dependent constants, $c_1,c_2$,
\begin{equation}
t\mapsto \frac{J(t)-c_1 t}{c_2 t^{1/3}}
\end{equation}
has in the $t\to\infty$ limit a non-trivial distribution function, say $D$. It is well-known that for KPZ models the distribution $D$ depends on classes of initial conditions~\cite{BR99,BR99b,PS00} (see also the reviews~\cite{Fer07,Cor11}). In particular, consider the case of non-random initial condition with density $\rho=1/2$, realized by placing at time $0$ particles on every even sites. The joint distribution of the current at different points has been studied~\cite{Sas05,BFPS06}. As a particular case, the one-point distribution is given by the Fredholm determinant, which is shown to be equal to the GOE Tracy-Widom distribution in~\cite{FS05b},
\begin{equation}
\lim_{t\to\infty} \Pb\left(J(t)\geq \tfrac14 t - s 2^{-2/3} t^{1/3}\right)=F_{\rm GOE}(2^{2/3} s),
\end{equation}
where $F_{\rm GOE}$ denotes the GOE Tracy-Widom distribution function discovered first in random matrix theory~\cite{TW96}. The analogue result was previously known for discrete time TASEP with parallel update and for a combinatorial model of longest increasing subsequences with involutions~\cite{BR99,BR99b}. This latter model was brought in connection to the KPZ world in~\cite{PS00}, where it was reinterpreted as a stochastic growth model (the so-called polynuclear growth model).

From~\cite{Jo03} we also have the variational formula
\begin{equation}\label{eq1}
F_{\rm GOE}(2^{2/3} s)=\Pb\Big(\max_{v\in\R} \{{\cal A}_2(v)-v^2\}\leq s\Big),
\end{equation}
where ${\cal A}_2$ is called the Airy$_2$ process~\cite{PS02,Jo03b}. There are many more variational formulas related with the Airy$_2$ process, see e.g.~\cite{BL13} and the review~\cite{QR13}.

By universality one expects that the GOE Tracy-Widom distribution describes the fluctuations of $J(t)$ in the large time limit for any non-random initial condition with density $\rho\in(0,1)$. Beyond the case of $\rho=1/2$, this was proven for densities $\rho=1/d$, $d=2,3,4,\ldots$ in~\cite{BFP06}, and for the low-density limit of reflecting Brownian motions in~\cite{FSW13} (in these works also the joint distribution of the current have been analyzed). In these papers, the results are achieved by exact formulas for a correlation kernel which describes the system. However, beyond the $d=2$ case, the asymptotic analysis in these special cases turned out to be quite involved. An exact formula has very recently been derived for arbitrary initial condition as well~\cite{MQR17}. Formulas for the system with periodic boundary condition are also know only for densities $1/2,1/3,\ldots$~\cite{BL16,BL16b}.

In this paper we prove that for any $\rho\in (0,1)$,
\begin{equation}
\lim_{t\to\infty} \Pb\left(J(t)\geq \rho(1-\rho) t - s (\rho(1-\rho))^{2/3} t^{1/3}\right)=F_{\rm GOE}(2^{2/3} s);
\end{equation}
compare this with Corollary~\ref{CorMainTASEP}. The proof of our result is in his core probabilistic, where the only input from exactly solvable cases is the convergence to the Airy$_2$ process for the so-called step initial condition and bounds on the tails of its one-point distribution. We prove the convergence to the variational problem (\ref{eq1}), which does not depend on $\rho$. For $\rho=1/2$ the limiting distribution function was already known to be given by $F_{\rm GOE}$. The method allows for more general, including random initial conditions, we first prove convergence to a more generic variational process in Theorem~\ref{ThmMainTASEP}.

To show the convergence to the variational problem, we work in the last passage percolation (LPP) framework (see Section~\ref{sec:TASEPandLPP} for definitions and details). In that language we need to study a ``line-to-point'' problem with the line having arbitrary slopes. Using a tightness result for the ``point-to-point'' problem (see Theorem~\ref{ThmTightness}) and a slow-decorrelation result (see Theorem~\ref{ThmSlowDecOnePt}) (which is then extended to a functional slow-decorrelation theorem (see Theorem~\ref{ThmFctSlowDec})) we can show, analogously to~\cite{CLW16}, the convergence of a restricted ``line-to-point'' LPP problem to the variational problem (\ref{eq1}) with $|u|\leq M$. The second step of the proof consists in showing that the original LPP is localized, which is obtained by obtaining a bound on the probability that the maximizer of the LPP is not localized on a $\Or(M t^{2/3})$ region. In particular, for the flat initial condition case, we obtain a Gaussian bound in $M$, see Lemma~\ref{LemmaBoundOnR} (for an analogue bound on the limit process, see Proposition~4.4 of~\cite{CH11}).

The strategy to prove the convergence for the restricted was first developed by Corwin, Liu and Wang in~\cite{CLW16}. In that paper, for generic initial conditions (possibly random) they obtained universal results showing that the distribution converges to a variational problem (which depends on how the initial condition scales under diffusive scaling), for cases which are macroscopically at density $1/2$.
In the continuous time setting, this was studied in~\cite{CFS16}. In particular, if the initial condition ``scales subdiffusively'', then for $\rho=1/2$ one still sees $F_{\rm GOE}$ fluctuations. This fact was predicted in the context of the KPZ equation in~\cite{QR16}.

The main technical novelty of our proof concerns the localization. In particular, unlike in~\cite{CLW16,CFS16}, we do not require any extra input from solvable models beyond the ones which are used to prove convergence in the restricted LPP problem. All we need is a good control on the point-to-point process along a horizontal line. The key idea is to bound the increment of the process by the ones of two stationary initial conditions, with densities slightly higher/lower than $\rho$, which are chosen such that the inequality holds on a set of high probability. This probability is given in terms of some exit point probabilities. This comparison was used first by Cator and Pimentel in~\cite{CP15b} (see also~\cite{Pim17}) to show tightness for the Hammersley process and the point-to-point LPP along a characteristic direction with ''speed'' $0$. In Lemma~\ref{lemmaExitPoint} we obtain much stronger exit point probabilities than in~\cite{Pim17}. More importantly, we use the inequality in two ways: (a) to extend the tightness result to any characteristic direction (which is needed to the analysis any density $\rho$), and (b) to control the fluctuations of the process over large distances (of order $M t^{2/3}$).

The control of the fluctuations over large distances is indeed a key ingredient to obtain the localization bound. This reduces the input from exactly solvable models with respect to~\cite{CLW16,CFS16}. In~\cite{CLW16} they introduced a non-intersecting line ensemble and the bound followed using its Gibbs-Brownian property in a smart way. In~\cite{CFS16} the bound was obtained using an explicit correlation kernel for the so-called ''half-flat'' initial condition. This approach allowed to simplify~\cite{CLW16}, but it has the drawback that it is restricted to the case $\rho=1/2$.

The main problem in analyzing directly $\rho\not\in\{1/2,1/3,1/4,\ldots\}$ was that an explicit expression for the correlation kernel was not known. In the recent paper on KPZ fixed point by Matetski, Quastel and Remenik~\cite{MQR17} they found an explicit representation of it which could be used to obtain our result (and also the convergence to the Airy$_1$ process). However, the analysis has been made only for $\rho=1/2$, since it was enough for answering the question on the KPZ fixed-point considered in the paper.

Although the method in this paper allows to get convergence only for the one-point distribution, its strategy could be used also for other models in the KPZ universality class. For instance, for the partially asymmetric simple exclusion process (PASEP), where an analogue of the work~\cite{MQR17} seems out of reach (an exact formula allowing the asymptotic analysis for PASEP even with $\rho=1/2$ is not known, although heavy efforts have been made in particular by Ortmann, Quastel and Remenik~\cite{OQR15,OQR16}). On the other hand, ingredients like slow-decorrelation hold also for PASEP using basic coupling~\cite{CFP10b}. Furthermore, as shown in~\cite{Fer17}, the mapping to LPP is actually not needed to analyze TASEP. This observation is relevant since for PASEP this mapping does not exist anymore. The main missing ingredient for an extension to PASEP is the convergence to the Airy$_2$ process for step initial condition. This is an open problem, but it looks easier than the analysis of PASEP with general densities $\rho$ through exact formulas (compare with the formulas for $\rho=1/2$ of~\cite{OQR15,OQR16}).

\bigskip
\emph{Outline}. In Section~\ref{sect:main} we define TASEP, LPP and present the main results. Section~\ref{sect:tightness} contains the proof of tightness and the derivation of a bound needed to control localization as well. Finally, we prove the main theorem for LPP and TASEP in Section~\ref{sect:mainThmproof}.

\bigskip\noindent
{\bf Acknowledgments.} The work is supported by the German Research Foundation as part of the SFB 1060--B04 project.

\section{Main results}\label{sect:main}

\subsection{LPP and TASEP} \label{sec:TASEPandLPP}
A last passage percolation (LPP) model on $\Z^2$ with independent random variables $\{\omega_{i,j},i,j\in\Z\}$ is the following. An \emph{up-right path} $\pi=(\pi(0),\pi(1),\ldots,\pi(n))$ on $\Z^2$ from a point $A$ to a point $E$ is a sequence of points in $\Z^2$ with \mbox{$\pi(k+1)-\pi(k)\in \{(0,1),(1,0)\}$}, with $\pi(0)=A$ and $\pi(n)=E$, and where $n$ is called the length $\ell(\pi)$ of $\pi$. Now, given a set of points $S_A$ and $E$, one defines the last passage time $L_{S_A\to E}$ as
\begin{equation}\label{eq3.2}
L_{S_A\to E}=\max_{\begin{subarray}{c}\pi:A\to E\\A\in S_A\end{subarray}} \sum_{1\leq k\leq \ell(\pi)} \omega_{\pi(k)}.
\end{equation}
Finally, we denote by $\pi^{\rm max}_{S_A\to E}$ any maximizer of the last passage time $L_{S_A\to E}$. For continuous random variables, the maximizer is a.s.\ unique.

TASEP is an interacting particle system on $\Z$ with state space $\Omega=\{0,1\}^\Z$. For a configuration $\eta\in\Omega$, $\eta=(\eta_j,j\in\Z)$, $\eta_j$ is the occupation variable at site $j$, which is $1$ if and only if $j$ is occupied by a particle. TASEP has generator $L$ given by~\cite{Li99}
\begin{equation}\label{1.1}
Lf(\eta)=\sum_{j\in\Z}\eta_j(1-\eta_{j+1})\big(f(\eta^{j,j+1})-f(\eta)\big),
\end{equation}
where $f$ are local functions (depending only on finitely many sites) and $\eta^{j,j+1}$ denotes the configuration $\eta$ with the
occupations at sites $j$ and \mbox{$j+1$} interchanged. Notice that for the TASEP the ordering of particles is preserved. That is, if initially one orders from right to left as
\[\ldots < x_2(0) < x_1(0) < 0 \leq x_0(0)< x_{-1}(0)< \cdots,\]
then for all times $t\geq 0$ also $x_{n+1}(t)<x_n(t)$, $n\in\Z$.

TASEP can be also though as a growth process by introducing the height function $h(j,t)$ as
\begin{equation}\label{1.11}
h(j,t)=
 \begin{cases}
 2J(t) +\sum^j_{i=1}(1-2\eta_i(t)) & \textrm{for }j\geq 1,\\
 2J(t) & \textrm{for }j=0,\\
 2J(t) -\sum^0_{i=j+1}(1-2\eta_i(t)) & \textrm{for }j\leq -1,
 \end{cases}
\end{equation}
for $j\in\Z$, $t\geq 0$, where $J(t)$ counts the number of jumps from site $0$ to site $1$ during the time-span $[0,t]$.

The connection between TASEP and LPP is as follows. Take $\omega_{i,j}$ to be the waiting time of particle $j$ to jump from site $i-j-1$ to site $i-j$.
Then $\omega_{i,j}$ are ${\rm Exp}(1)$ i.i.d.\ random variables. Further, setting the set $S_A=\{(u,k)\in\Z^2: u=k+x_k(0), k\in\Z\}$, we have that
\begin{equation}\label{eq2.4}
\Pb\left(L_{{S_A}\to (m,n)}\leq t\right)=\Pb\left(x_n(t)\geq m-n\right)=\Pb\left(h(m-n,t)\geq m+n\right).
\end{equation}

\subsection{Universality for LPP}
For any fixed $\rho\in (0,1)$, we consider the LPP model with $S_A$ corresponding to TASEP with initial condition $x_k^{\rm flat}(0)=-\lfloor k/\rho\rfloor$, $k\in\Z$. We denote this initial set by
\begin{equation}\label{eqL}
\Lflat=\left\{\left(\lfloor \tfrac{\rho-1}{\rho} x\rfloor,x\right),x\in\Z\right\}
\end{equation}
and we are interested in the LPP from $\Lflat$ to $E_N(w)$ in the limit $N\to\infty$ illustrated in Figure~\ref{FigLPP}. However, the approach used in the proof allows to consider more general (also random) initial conditions. Thus we consider TASEP with initial condition close to the flat initial condition with density $\rho$ as well. Denote by
\begin{equation}
u_k=x_k(0)-x_k^{\rm flat}(0)
\end{equation}
the deviation of the particle position with respect to the flat initial condition with density $\rho$. In this setting, in the LPP setting, we need to consider the initial set
\begin{equation}\label{eqLgeneral}
\LL=\left\{\left(\lfloor\tfrac{\rho-1}{\rho} k\rfloor+u_k,k\right),k\in\Z\right\}.
\end{equation}
We also denote
\begin{equation}
\chi=\rho(1-\rho).
\end{equation}
Let
\begin{equation}\label{eqDefSflat}
A^{\rm flat}(v)=\left(-2(1-\rho)\chi^{-1/3}v N^{2/3},2 \rho\chi^{-1/3}v N^{2/3}\right)
\end{equation}
and define by $A(v)$ the closest point on $\LL$ to the characteristic line with direction $\mathbf{e}_\rho=((1-\rho)^2,\rho^2)$ passing by $A^{\rm flat}(v)$. Then define $\lambda(v)$ by
\begin{equation}\label{eqDefS}
A(v) = A^{\rm flat}(v)+\lambda(v)\mathbf{e}_\rho
\end{equation}
To avoid that the randomness in the initial condition dominates the bulk ones, we assume\\
\textbf{Assumption A:}
\begin{equation}
\lim_{N\to\infty} \frac{\lambda(v)}{\chi^{-2/3} N^{1/3}}= {\cal R}(v)=\sqrt{2}\sigma {\cal B}(v),
\end{equation}
weakly on the space of continuous functions on bounded sets, where ${\cal B}$ is a two-sided Brownian motion and $\sigma\geq 0$ a coefficient. The stationary initial condition is $\sigma=1$, while the flat initial condition is $\sigma=0$.

Furthermore, we assume that globally the starting height function (or particle positions) are not deviating too much from the flat case, so that the maximization problem is non-trivially correlated only with the randomness in a $N^{2/3}$-neighborhood of the origin. \\
\textbf{Assumption B:} For any given $\delta>0$ and $M>0$, there exists a $N_0$ such that for all $N\geq N_0$,
\begin{equation}
\Pb(\lambda(v)\geq -\delta v^2 N^{1/3}\textrm{ for all }|v|\geq M)\geq 1-Q(M),\quad \lim_{M\to\infty} Q(M)=0,
\end{equation}
where $v$ are restricted to those such that $A(v)$ is connected to the end-point of the LPP by an up-right path.

These assumptions clearly holds for LPP corresponding to flat initial conditions, but also to the case where the deviation of the initial height function scales diffusively like in the stationary initial conditions. Under these assumptions we show the following universality result.
\begin{thm}\label{ThmMainLPP}
Let $\rho\in (0,1)$, $\chi=\rho(1-\rho)$. Set the end-point of the LPP as $E_N(w)=(m_N(w),n_N(w))$ with
\begin{equation}
\begin{aligned}
m_N(w)&=\tfrac{1-\rho}{\rho} N-2 w (1-\rho)\chi^{-1/3}N^{2/3},\\
n_N(w)&=\tfrac{\rho}{1-\rho} N +2 w \rho \chi^{-1/3}N^{2/3},
\end{aligned}
\end{equation}
Under Assumptions A and B, for any $s\in\R$,
\begin{equation}\label{eqMainThm}
\lim_{N\to\infty} \Pb\left(L_{\LL\to E_N(w)}\leq \frac{N}{\chi} + \frac{s N^{1/3}}{\chi^{2/3}}\right) = \Pb\left(\max_{v\in\R} \{{\cal A}_2(v)-(v-w)^2+{\cal R}(v)\}\leq s\right).
\end{equation}
where ${\cal A}_2$ is the Airy$_2$ process~\cite{PS02}.
In particular, for LPP from $\Lflat$, for which ${\cal R}=0$, we have
\begin{equation}\label{eqMainThmBis}
\lim_{N\to\infty} \Pb\left(L_{\Lflat\to E_N(w)}\leq N/\chi + s N^{1/3}/\chi^{2/3}\right) =  F_{\rm GOE}(2^{2/3} s),
\end{equation}
where $F_{\rm GOE}$ is the GOE Tracy-Widom distribution function~\cite{TW96}.
\end{thm}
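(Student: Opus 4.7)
The plan is to represent the line-to-point LPP as a maximum of point-to-point passage times over starting points on $\LL$, truncate to a bounded window $|v|\le M$, extract the functional limit on the window, and send $M\to\infty$. Write
\[
L_{\LL\to E_N(w)}=\max_{v}L_{A(v)\to E_N(w)},
\]
where $v$ parametrizes points on $\LL$ via $A(v)=A^{\mathrm{flat}}(v)+\lambda(v)\mathbf{e}_\rho$; after centering by $N/\chi$ and rescaling by $\chi^{-2/3}N^{1/3}$, the event on the left-hand side of~(\ref{eqMainThm}) becomes $\{\max_{v}\widetilde L_N(v)\le s\}$, with $\widetilde L_N(v)$ the rescaled passage time from $A(v)$ to $E_N(w)$. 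On the initial-condition side, Assumption~B rules out that a large negative excursion of $\lambda$ produces a dominant contribution from $|v|>M$; on the bulk side, Lemma~\ref{LemmaBoundOnR} provides a uniform-in-$N$ Gaussian-in-$M$ bound on the probability that the maximizer in $v$ lies outside $[-M,M]$. An analogous bound for the limit process (Proposition~4.4 of~\cite{CH11}) shows that it is also localized. Hence the restricted and unrestricted maxima agree with probability arbitrarily close to~$1$ as $M\to\infty$, uniformly in $N$, and likewise for the limit.

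\textbf{Functional limit on $[-M,M]$.} For the auxiliary process $\widetilde L_N^{\mathrm{flat}}(v):=(L_{A^{\mathrm{flat}}(v)\to E_N(w)}-N/\chi)\chi^{2/3}/N^{1/3}$, an expansion of the Rost shape $(\sqrt{m}+\sqrt{n})^2$ around $(m,n)=E_N(w)-A^{\mathrm{flat}}(v)$ produces the deterministic parabola $-(v-w)^2$ after rescaling, and the convergence of point-to-point LPP to the Airy$_2$ process (known for step-type initial data) combined with Theorem~\ref{ThmTightness} upgrades this to the functional convergence
\[
\widetilde L_N^{\mathrm{flat}}(v)\Longrightarrow \mathcal{A}_2(v)-(v-w)^2\quad\text{on }C([-M,M]).
\]
Since $A(v)-A^{\mathrm{flat}}(v)=\lambda(v)\mathbf{e}_\rho$ lies along the characteristic direction of the problem ending at $E_N(w)$, a direct computation with $(\sqrt{m}+\sqrt{n})^2$ shows that this shift changes the deterministic passage time by $-\lambda(v)$, while the functional slow-decorrelation theorem (Theorem~\ref{ThmFctSlowDec}) shows that the $N^{1/3}$-fluctuation is unaffected. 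Assumption~A therefore gives
\[
\widetilde L_N(v)\Longrightarrow \mathcal{A}_2(v)-(v-w)^2-\mathcal{R}(v)\quad\text{on }C([-M,M]),
\]
which has the same law as $\mathcal{A}_2(v)-(v-w)^2+\mathcal{R}(v)$ by the symmetry $\mathcal{B}\stackrel{d}{=}-\mathcal{B}$ together with the independence of initial-condition and bulk randomness in the limit. The continuous mapping theorem applied to the supremum functional concludes the convergence on the window.

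\textbf{Passage to the limit and the flat case.} Sending $M\to\infty$ and combining with the truncation estimates yields~(\ref{eqMainThm}). For $\Lflat$, $\sigma=0$ forces $\mathcal{R}\equiv 0$, and a translation $v\mapsto v+w$ combined with the stationarity in law of $\mathcal{A}_2$ reduces $\max_v\{\mathcal{A}_2(v)-(v-w)^2\}$ to $\max_v\{\mathcal{A}_2(v)-v^2\}$, which by the Johansson identity~(\ref{eq1}) equals $F_{\mathrm{GOE}}(2^{2/3}s)$ in distribution. This proves~(\ref{eqMainThmBis}).

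\textbf{Main obstacle.} The crux is the bulk-side localization of Lemma~\ref{LemmaBoundOnR}. Without an exactly solvable correlation kernel at generic density $\rho$, it must be obtained by a probabilistic comparison rather than from a Fredholm determinant: sandwich the increments of $v\mapsto L_{A^{\mathrm{flat}}(v)\to E_N(w)}$ between those of two stationary LPPs with densities $\rho\pm\e$, tuned so that the sandwich fails only on an event controlled by an exit-point estimate (Lemma~\ref{lemmaExitPoint}). Upgrading these sharp exit-point probabilities into the Gaussian-in-$M$ tail of Lemma~\ref{LemmaBoundOnR}, valid for \emph{every} $\rho\in(0,1)$, is the most delicate step of the program and is where I would expect the hardest technical work to lie.
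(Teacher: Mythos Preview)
Your proposal is essentially correct and follows the same architecture as the paper's proof: truncate to $|v|\le M$, obtain the functional limit on the window via tightness plus finite-dimensional convergence plus slow decorrelation, control the complement via Lemma~\ref{LemmaBoundOnR} (which indeed is the heart of the matter and is obtained exactly as you describe, by sandwiching increments between two stationary LPPs and using the exit-point estimate of Lemma~\ref{lemmaExitPoint}), and then send $M\to\infty$.

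One point deserves a correction. You take the tilted line $\Lflat$ (i.e.\ the points $A^{\rm flat}(v)$) as your reference for the functional limit and invoke Theorem~\ref{ThmTightness} there. But Theorem~\ref{ThmTightness} is stated and proved for the \emph{horizontal-line} process $L_n^{\rm resc,h}$; the comparison inequalities with the stationary model (Lemma~\ref{LemmaIncrementsBounds}) are formulated along a fixed row, and do not apply directly to endpoints varying along a tilted segment. The paper therefore takes the horizontal line $\widetilde A(v)=(-\alpha_1 v N^{2/3},0)$ as its reference: tightness and weak convergence to ${\cal A}_2(v)-(v-w)^2$ are established \emph{there} (Corollary~\ref{corWeakConvergence}), and then a single application of the functional slow-decorrelation theorem (Theorem~\ref{ThmFctSlowDec}) transports the limit from $\widetilde A(v)$ directly to $A(v)\in\LL$, picking up the centering shift $\Delta_N(v)$ coming from Assumption~A along the way. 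Your two-step route (horizontal $\to A^{\rm flat}\to A$) is not wrong, but the first step already needs the slow-decorrelation machinery you only invoke at the second; once you grant that, going from the horizontal line to $\LL$ in one shot is cleaner. Your remark about the sign of $\mathcal R$ and the symmetry $\mathcal B\stackrel{d}{=}-\mathcal B$ is well taken and harmless for exactly the reason you give.
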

In~\cite{BKS12} the distribution of the position where the maximum of ${\cal A}_2(v)-v^2$ is attained has been derived. Due to the quadratic term it is localized and bounds can be found in~\cite{CH11,QR12b}. These bounds can be compared with our Lemma~\ref{LemmaBoundOnR}, where we obtain a Gaussian bound in $M$ of the probability that the maximizers is not in a main region of order $\Or(M N^{2/3})$ (uniformly for all $N$ large enough).

\begin{figure}
\begin{center}
\psfrag{n}[lc]{$n$}
\psfrag{m}[lc]{$m$}
\psfrag{L}[lc]{$\LL$}
\psfrag{Lf}[lc]{$\Lflat$}
\psfrag{pi}[lc]{$\pi$}
\psfrag{E}[lc]{$E_N(w)$}
\includegraphics[height=3.5cm]{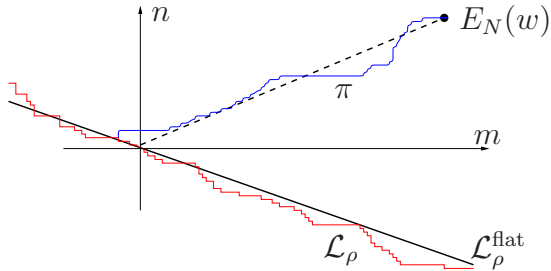}
\caption{The last passage percolation setting considered in Theorem~\ref{ThmMainLPP}. The maximizer $\pi$ from $\LL$ (red) to $E_N(w)$ starts in a $\Or(N^{2/3})$-neighborhood of the origin. The straight thick line represents $\Lflat$.}
\label{FigLPP}
\end{center}
\end{figure}

\begin{remark}
From the work on KPZ equation of Remenik and Quastel~\cite{QR16} it is conjectured that for KPZ growth models, if the initial configuration is flat with subdiffusive scaling, then the limiting distribution is the same as for the flat case (see Theorem~1.5 and subsequent remarks in~\cite{QR16}). In the LPP framework this corresponds to have $\LL$ replaced by a (possibly random) down-right line, which at distance $X$ from the origin has fluctuations at most $\Or(|X|^\delta)$ for some $\delta<1/2$. Theorem~\ref{ThmMainLPP} confirms it for general densities (since in that case ${\cal R}=0$); compare with~\cite{CFS16,CLW16} for the analogue result at $\rho=1/2$.
\end{remark}

The proof of the main theorem (Theorem~\ref{ThmMainLPP}) is in his core probabilistic and it is based on the comparison of the LPP problem from a horizontal line to $E_N(w)$, where the line is around the region where the LPP from $\LL$ to $E_N(w)$ is achieved. If we look the maximizers from the $E_N(w)$ position backwards, this is equivalent to consider the LPP from $(0,0)$ to a horizontal line crossing $(\gamma^2 n,n)$ for some $\gamma\in (0,\infty)$ with $n$ proportional to $N$. Therefore consider the following LPP setting: for $i,j\geq 1$, let $\omega_{i,j}$ be i.i.d.\ ${\rm Exp}(1)$ random variables, $\omega_{i,j}=0$ for $i\leq 0$ or $j\leq 0$.

The estimate from law of large numbers for the LPP from the origin to $(M,N)$ is given by $(\sqrt{M}+\sqrt{N})^2$ (as shown by Rost~\cite{R81} in the TASEP setting). Due to KPZ scaling we define the rescaled last passage time\footnote{Here and below we will not write the integer parts explicitly in the entries of the LPP.}
\begin{equation}\label{eq3.1}
L^{\rm resc,h}_n(u):=\frac{L_{(0,0)\to(\gamma^2 n+ \beta_1 u n^{2/3},n)}- n(1+\sqrt{\gamma^2+ \beta_1 u n^{-1/3}})^2}{\beta_2 n^{1/3}},
\end{equation}
where we set $\beta_1=2(1+\gamma)^{2/3}\gamma^{4/3}$ and $\beta_2=(1+\gamma)^{4/3} \gamma^{-1/3}$. The coefficient $\beta_2$ is chosen to have the one-point distribution given by the GUE Tracy-Widom distribution~\cite{TW94}, as shown by Johansson in Theorem~1.6 of~\cite{Jo00b}. The coefficient $\beta_1$ is chosen such that the limit process converges to the Airy$_2$ process~\cite{PS02}, ${\cal A}_2$. The finite-dimensional convergence to the Airy$_2$ process is a special case of~\cite{BF07,BP07,SI07}. Note that since
\begin{equation}
n(1+\sqrt{\gamma^2+ \beta_1 u n^{-1/3}})^2 = (1+\gamma)^2 n + 2 u (1+\gamma)^{5/3}\gamma^{1/3} n^{2/3}-\beta_2 u^2 n^{1/3}+\Or(1)
\end{equation}
we can replace in (\ref{eq3.1}) also the approximation of the LLN until the order $n^{1/3}$ only without any relevant changes.

\begin{thm}\label{ThmTightness}
Fix any $M\in (0,\infty)$. Then, $u\mapsto L^{\rm resc}_{n}(u)$ is tight in the space of continuous functions on $[-M,M]$, ${\cal C}([-M,M])$.
\end{thm}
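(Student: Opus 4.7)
The plan is to establish tightness in $\mathcal{C}([-M,M])$ via the Kolmogorov--Chentsov criterion: combine tightness of the one-point marginal $L^{\rm resc}_n(0)$ with a uniform-in-$n$ moment bound on the increments. The one-point part is immediate, since Johansson's theorem~\cite{Jo00b} gives convergence to the GUE Tracy--Widom law; the work is entirely in the increments. The strategy, as the introduction suggests, is the Cator--Pimentel~\cite{CP15b} sandwich, using two stationary LPP processes with densities slightly perturbed around the characteristic density associated with the slope $\gamma^2:1$.

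More concretely, let $\rho_\gamma$ be the density for which the characteristic direction from the origin in the stationary model points toward $(\gamma^2,1)$. I would introduce two auxiliary stationary LPP processes $L^\pm_n$ by attaching Exp$(\rho_\gamma\pm\e)$ boundary weights on one axis and Exp$(1-\rho_\gamma\mp\e)$ on the other, with $\e=\e(n,M)$ of order $M n^{-1/3}$ (so that the resulting characteristic direction is shifted on the natural KPZ scale of the interval $[-M,M]$). By the standard monotonicity argument, on the event $\mathcal E_n$ that the exit point of the $L^+_n$-maximizer lies on the ``upper'' axis and the exit point of the $L^-_n$-maximizer lies on the ``lower'' axis, uniformly for $u\in[-M,M]$, one obtains the sandwich
\begin{equation*}
L^-_n(u_2)-L^-_n(u_1)\ \leq\ L^{\rm resc}_n(u_2)-L^{\rm resc}_n(u_1)\ \leq\ L^+_n(u_2)-L^+_n(u_1).
\end{equation*}
On the stationary side, the increments along the horizontal line at height $n$ are, via Burke's property, equal in distribution to a sum of independent exponential boundary weights, for which Gaussian concentration yields uniform moment bounds $\E\bigl[(L^\pm_n(u_2)-L^\pm_n(u_1))^{2p}\bigr]\leq C_p|u_2-u_1|^p$ after the $\beta_2 n^{1/3}$ rescaling. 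Choosing $p>1$ supplies exactly the Kolmogorov--Chentsov hypothesis on the stationary side.

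The main obstacle is the uniform control of $\mathcal E_n$. The parameter $\e$ must be large enough that the exit point remains on the correct axis throughout $[-M,M]$ with high probability, but small enough that the sandwich captures the correct scale of the point-to-point increment rather than being dominated by the drift mismatch between densities $\rho_\gamma$ and $\rho_\gamma\pm\e$. Sharp uniform-in-$n$ exit-point estimates are exactly what Lemma~\ref{lemmaExitPoint} provides, and the improvement over~\cite{Pim17} is essential here in order to cover any characteristic direction $\gamma$ with the same strength. Assuming $\Pb(\mathcal E_n^c)\leq \kappa(M,n)$ with $\kappa$ small, the sandwich combined with the stationary bound gives
\begin{equation*}
\Pb\bigl(|L^{\rm resc}_n(u_2)-L^{\rm resc}_n(u_1)|>\lambda\bigr)\ \leq\ \kappa(M,n)\ +\ C\lambda^{-2p}|u_2-u_1|^p,
\end{equation*}
from which the standard Kolmogorov--Chentsov argument yields tightness of $L^{\rm resc}_n$ in $\mathcal{C}([-M,M])$. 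The delicate step throughout is balancing $\e$ with the exit-point tail: this is the only place where input from exactly solvable models (the stationary LPP and the Burke property) is essentially used in the proof of tightness.
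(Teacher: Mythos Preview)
Your overall architecture is the same as the paper's --- Cator--Pimentel sandwich with densities $\rho_0\pm\kappa n^{-1/3}$, Burke's property to turn the stationary increments into i.i.d.\ exponential sums, and Lemma~\ref{lemmaExitPoint} to make the exit-point event $\mathcal E_n$ have high probability. But the way you package the conclusion has a genuine gap. The tail bound you write,
\[
\Pb\bigl(|L^{\rm resc}_n(u_2)-L^{\rm resc}_n(u_1)|>\lambda\bigr)\ \leq\ \kappa(M,n)\ +\ C\lambda^{-2p}|u_2-u_1|^p,
\]
cannot be fed into ``the standard Kolmogorov--Chentsov argument'': the criterion needs a moment bound $\E|L^{\rm resc}_n(u_2)-L^{\rm resc}_n(u_1)|^{2p}\leq C|u_2-u_1|^p$, and integrating the constant term $\kappa(M,n)$ against $2p\lambda^{2p-1}\,d\lambda$ diverges. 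No matter how small $\kappa$ is, it does not vanish with $|u_2-u_1|$, so you never recover a two-point moment estimate. The fix is to abandon the pointwise moment route and control the modulus of continuity directly: write
\[
\Pb(\varpi_n(\delta)\geq\e)\leq \Pb(\mathcal E_n^c)+\Pb\bigl(\{\varpi_n(\delta)\geq\e\}\cap\mathcal E_n\bigr),
\]
choose $\kappa$ (depending on the target $\tilde\e$, not only on $M$) so that the first term is $\leq\tilde\e/2$, and then on $\mathcal E_n$ use the sandwich to reduce to the modulus of continuity of the stationary increments $B^{\rho_\pm}_n$. This is exactly what the paper does; on the stationary side it invokes Donsker to pass to Brownian motion and then a Gaussian supremum bound, but your $2p$-th moment estimate on $B^{\rho_\pm}_n$ would work equally well \emph{after} the split on $\mathcal E_n$.

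One further omission: the sandwich for $L^{\rm resc,h}_n$ is not clean. Because the rescaling in (\ref{eq3.1}) subtracts the curved law-of-large-numbers term while $B^{\rho_\pm}_n$ is centered linearly at slope $(1-\rho_\pm)^{-1}$, the comparison picks up deterministic remainders $(u^2-v^2)+2\beta_2\kappa(u-v)$ (see Lemma~\ref{lemmaBoundsRescaledProcesses}). These are Lipschitz on $[-M,M]$ with constant $\Or(M+\kappa)$ and are harmless once absorbed into the $\delta$-budget, but they must be accounted for --- in particular the $2\beta_2\kappa(u-v)$ term is the ``drift mismatch'' you allude to, and it does not disappear under the rescaling.
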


As a direct consequence of the convergence of finite-dimensional distributions and tightness we have:
\begin{cor}\label{corWeakConvergence}
For any given finite $M>0$, $u\mapsto L_n^{\rm resc}(u)$ converges weakly to an Airy$_2$ process $u\mapsto {\cal A}_2(u)$ in ${\cal C}([-M,M])$.
\end{cor}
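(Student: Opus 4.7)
The plan is to derive this corollary by combining the two ingredients explicitly flagged in the paragraph preceding the statement: tightness in $C([-M,M])$, provided by Theorem~\ref{ThmTightness}, and convergence of the finite-dimensional distributions of $L_n^{\rm resc}$ to those of the Airy$_2$ process, which the authors attribute to \cite{BF07,BP07,SI07}. This is the standard Prohorov paradigm for proving weak convergence in a Polish path space.

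First, Theorem~\ref{ThmTightness} says that the family of laws of $\{L_n^{\rm resc}\}_n$ on $C([-M,M])$ is tight. By Prohorov's theorem, every subsequence $\{L_{n_k}^{\rm resc}\}$ admits a further subsequence $\{L_{n_{k_\ell}}^{\rm resc}\}$ converging weakly in $C([-M,M])$ to some random continuous function $Y$. Second, for any finite collection $u_1,\ldots,u_d\in[-M,M]$, the evaluation map $\phi\mapsto(\phi(u_1),\ldots,\phi(u_d))$ is continuous from $C([-M,M])$ to $\R^d$, so the continuous mapping theorem gives
\begin{equation}
\bigl(L_{n_{k_\ell}}^{\rm resc}(u_1),\ldots,L_{n_{k_\ell}}^{\rm resc}(u_d)\bigr)\Rightarrow \bigl(Y(u_1),\ldots,Y(u_d)\bigr).
\end{equation}
On the other hand, the cited finite-dimensional convergence asserts that the same vectors converge to $(\mathcal{A}_2(u_1),\ldots,\mathcal{A}_2(u_d))$. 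Hence the finite-dimensional distributions of $Y$ coincide with those of $\mathcal{A}_2$ restricted to $[-M,M]$.

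Since the Airy$_2$ process has almost surely continuous sample paths, its restriction to $[-M,M]$ defines a Borel probability measure on the Polish space $C([-M,M])$ that is uniquely determined by its finite-dimensional distributions. Therefore $Y\stackrel{d}{=}\mathcal{A}_2|_{[-M,M]}$. Because every subsequential weak limit is this same measure, the whole sequence $L_n^{\rm resc}$ converges weakly to $\mathcal{A}_2$ in $C([-M,M])$.

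I do not expect a genuine obstacle here: the heavy lifting is done upstream in Theorem~\ref{ThmTightness} and in the probabilistic identification of the finite-dimensional limit. The only mildly delicate point is invoking continuity of the sample paths of the limiting Airy$_2$ process to justify that its law on $C([-M,M])$ is determined by its finite-dimensional marginals; this is standard and well-documented in the literature on the Airy$_2$ process (e.g.\ \cite{PS02,Jo03b}).
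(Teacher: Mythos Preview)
Your argument is correct and matches the paper's approach exactly: the authors present the corollary as a direct consequence of tightness (Theorem~\ref{ThmTightness}) together with the known finite-dimensional convergence, without writing out a separate proof. The Prohorov-type argument you spell out is precisely the standard justification they have in mind.
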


The next result which is in itself interesting is a bound of the exit point probability for the stationary situation, which can be achieved (see more details in Section~\ref{sectExitPoints}) if we consider the LPP as before but with extra random variables if $i=0$ or $j=0$, namely with
\begin{equation}\label{stationaryboundary}
 \omega_{i,j}= \begin{cases}
  0 & i=0, j=0,\\
  {\rm Exp}(1-\rho) & i\geq 1, j=0,\\
  {\rm Exp}(\rho) & i=0, j\geq 1,\\
  {\rm Exp}(1) & i\geq 1, j\geq 1.\\
 \end{cases}
\end{equation}
Here ${\rm Exp}(a)$ denotes exponential random variables with parameter $a$ (thus average $1/a$). For the LPP with boundary conditions \eqref{stationaryboundary} we define the \emph{exit point} as the last point of a path $\pi_{(0,0)\rightarrow(m,n)}$ on the $x$-axis or the $y$-axis. Since we need to distinguish whether the exit point is on the $x$- or on the $y$-axis, we introduce a random variable $Z^{\rho}(m,n)\in\mathbb{Z}$ such that, if $Z^{\rho}(m,n)>0$, then the exit point is $(Z^{\rho}(m,n),0)$, as if $Z^{\rho}(m,n)<0$, then the exit point is $(0,-Z^{\rho}(m,n))$.

\begin{lem}[Exit point probability]\label{lemmaExitPoint}
Let $\kappa>0$ be given and set
\begin{equation}
\rho_\pm=\rho_0\pm \kappa n^{-1/3}\textrm{ with }\rho_0=\frac{1}{\gamma+1}.
\end{equation}
Then there exists a $n_0$ such that for all $n\geq n_0$,
\begin{equation}
\begin{aligned}
\Pb(Z^{\rho_+}(\gamma^2 n,n)>0)&\geq 1-C \exp(-c \kappa^2),\\
\Pb(Z^{\rho_-}(\gamma^2 n,n)<0)&\geq 1-C \exp(-c \kappa^2),
\end{aligned}
\end{equation}
for some constants $C,c$ independent of $\kappa$ (and which can be taken uniform for $\gamma$ in a bounded set).
\end{lem}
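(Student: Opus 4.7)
The plan is to prove the first inequality; the second follows by swapping the roles of the two axes, that is, applying the same argument to the LPP with $(\gamma,\rho_-)$ replaced by $(1/\gamma,1-\rho_-)$. By definition, $Z^{\rho_+}(\gamma^2 n, n) \leq 0$ means the maximizer from $(0,0)$ to $(\gamma^2 n,n)$ does not exit through a positive-$x$-axis edge. Decompose $L^{\rho_+}(\gamma^2 n, n) = L^x \vee L^y$, where $L^x=\max_{k\geq 1}L^x_k$ with $L^x_k=\sum_{i=1}^{k}\omega^{\rho_+}_{(i,0)}+L^*((k,1)\to(\gamma^2 n,n))$, and $L^y$ analogously from the $y$-axis; here $L^*$ denotes bulk LPP using only the i.i.d.\ $\mathrm{Exp}(1)$ weights. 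Then $\{Z^{\rho_+}\leq 0\}\subseteq\{L^y\geq L^x_{k_*}\}$ for every deterministic barrier $k_*$, and the strategy is to choose $k_*$ so that the right-hand event has probability $\leq C e^{-c\kappa^2}$.

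Choose $k_*=\lfloor c_0\kappa n^{2/3}\rfloor$, where $c_0>0$ is the saddle-point location obtained by maximizing, at leading order in $n$, the deterministic surrogate $k\mapsto k/(1-\rho_+)+(\sqrt{\gamma^2 n-k}+\sqrt{n})^2$. Taylor-expanding around $\rho_0=1/(\gamma+1)$ one finds that the $n^{2/3}$ coefficient vanishes at the characteristic direction while the $n^{1/3}$ coefficient is strictly positive in $\kappa^2$, giving $\E L^x_{k_*}\geq(1+\gamma)^2 n+c_1\kappa^2 n^{1/3}$. On the other hand, for $\rho_+>\rho_0$ the $y$-axis boundary weights have a mean below the bulk break-even rate, so the restricted maximum satisfies $\E L^y\leq (1+\gamma)^2 n+o(\kappa^2 n^{1/3})$; this can be seen either by a direct expansion of the Bai\k--Ben Arous--P\'ech\'e (BBP) mean formula or by monotone coupling against a sub-characteristic density.

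The event $\{L^y\geq L^x_{k_*}\}$ then forces either $L^x_{k_*}$ to fall below its mean by $c_1\kappa^2 n^{1/3}/2$, or $L^y$ to exceed its mean by the same amount. Split $L^x_{k_*}=B_{k_*}+\widetilde L$ with $B_{k_*}=\sum_{i=1}^{k_*}\omega^{\rho_+}_{(i,0)}$ and $\widetilde L=L^*((k_*,1)\to(\gamma^2 n,n))$; these are independent, $B_{k_*}$ has Gaussian concentration at the scale $\sqrt{k_*}\sim\kappa^{1/2}n^{1/3}$, and $\widetilde L$ satisfies the Tracy--Widom-type lower tail $\Pb(\widetilde L\leq\E\widetilde L-sn^{1/3})\leq Ce^{-cs^2}$ from the one-point estimate of~\cite{Jo00b}. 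For $L^y$, viewed as an LPP stationary only on the $y$-axis, the standard one-sided BBP tail provides $\Pb(L^y\geq\E L^y+sn^{1/3})\leq Ce^{-cs^{3/2}}$ uniformly in $\kappa\geq 0$ through a monotone coupling with a point-to-point bulk LPP. Summing the three contributions at $s\asymp\kappa^2$ yields the claimed Gaussian bound $\Pb(Z^{\rho_+}(\gamma^2 n,n)\leq 0)\leq Ce^{-c\kappa^2}$, with constants uniform in $\gamma$ on compact sets.

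The main obstacle is the upper-tail control of $L^y$: although intuitively a boundary at sub-optimal density should give a smaller LPP, making this rigorous uniformly in $\kappa$ requires either a comparison with a stationary process at a well-chosen reference density, or a direct BBP-type tail bound. The former is the cleaner route: couple $L^y$ to a second stationary model at density $\rho_0$ on the $y$-axis and use monotonicity in the boundary weights together with the sharp upper-tail estimate for bulk LPP. The remaining ingredients (Gaussian concentration of $B_{k_*}$, the lower tail for $\widetilde L$, and the quantitative saddle-point computation of $c_1$) are routine and do not depend on the specific value of $\gamma$.
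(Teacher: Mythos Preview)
Your approach is essentially the same as the paper's: reduce to one inequality by symmetry, write $\{Z^{\rho_+}\leq 0\}\subset\{L^y\geq L^x_{k_*}\}$, pick the deterministic barrier $k_*\asymp\kappa n^{2/3}$ by optimizing the mean, control the lower tail of $L^x_{k_*}$ by splitting it into an exponential sum plus a bulk point-to-point LPP, and control the upper tail of $L^y$ by monotone coupling to the critical density $\rho_0$ followed by a BBP-type tail bound. The paper phrases the comparison via a threshold $x=(1+\gamma)^2 n+a\kappa^2\beta_2 n^{1/3}$ and the union bound $\Pb(L_-<L_|)\leq\Pb(L_-\leq x)+\Pb(L_|>x)$, but this is the same decomposition.

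One imprecision worth fixing: after coupling $L^y$ to density $\rho_0$ you do \emph{not} obtain a bulk point-to-point LPP but the critical rank-one perturbation $L^{\rho_0}_{|}$, so ``sharp upper-tail estimate for bulk LPP'' is not the right tool. The paper obtains the needed tail $\Pb(L^{\rho_0}_{|}>x)\leq Ce^{-c\kappa^2}$ directly from the Fredholm-determinant representation of~\cite{BBP06} (Lemma~\ref{LemmaBoundDistributions}); this is what you call ``a direct BBP-type tail bound'' and is in fact the route you need. Also, the lower-tail exponent you cite for the bulk piece $\widetilde L$ should be $e^{-cs^{3/2}}$ (Proposition~\ref{PropBounds}(c)) rather than $e^{-cs^2}$, though either is more than sufficient here.
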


A simple change of variables gives the following result.
\begin{cor}\label{corExitPoint}
In the settings of Lemma~\ref{lemmaExitPoint}, for any given $M>0$ and $\kappa$ satisfying
\begin{equation}
\tilde\kappa=\kappa - M \gamma^{1/3}(1+\gamma)^{-4/3}>0
\end{equation}
it holds
\begin{equation}
\begin{aligned}
\Pb(Z^{\rho_+}(\gamma^2n-\beta_1 M n^{2/3},n)\geq 0) & \geq 1-C\exp(-c \tilde \kappa^2),\\
\Pb(Z^{\rho_-}(\gamma^2n+\beta_1 M n^{2/3},n)\geq 0) & \geq 1-C\exp(-c \tilde \kappa^2).
\end{aligned}
\end{equation}
\end{cor}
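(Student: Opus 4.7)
The plan is to reduce the corollary to Lemma~\ref{lemmaExitPoint} by rewriting the shifted endpoint as a characteristic endpoint for a slightly perturbed geometry and a slightly different base density $\tilde\rho_0$. Concretely, for the first inequality, write the endpoint as
\begin{equation*}
\bigl(\gamma^2 n-\beta_1 M n^{2/3},\,n\bigr)=\bigl(\tilde\gamma^{2}n,\,n\bigr),\qquad \tilde\gamma^{2}=\gamma^{2}-\beta_{1}Mn^{-1/3},
\end{equation*}
and set $\tilde\rho_0=1/(\tilde\gamma+1)$. The content of the lemma, applied with parameters $(\tilde\gamma,\tilde\rho_0)$ in place of $(\gamma,\rho_0)$, gives the exit-point bound along the characteristic direction for any density of the form $\tilde\rho_0+\tilde\kappa n^{-1/3}$. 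It then suffices to check that $\rho_+=\rho_0+\kappa n^{-1/3}$ equals, to the relevant order, $\tilde\rho_0+\tilde\kappa n^{-1/3}$ with $\tilde\kappa=\kappa-M\gamma^{1/3}(1+\gamma)^{-4/3}$.

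The second step is a one-line Taylor expansion. Since $\tilde\gamma=\gamma\sqrt{1-\beta_{1}Mn^{-1/3}/\gamma^{2}}=\gamma-\tfrac{\beta_{1}M}{2\gamma}n^{-1/3}+\Or(n^{-2/3})$, we get
\begin{equation*}
\tilde\rho_0=\frac{1}{1+\tilde\gamma}=\frac{1}{1+\gamma}+\frac{\beta_{1}M}{2\gamma(1+\gamma)^{2}}n^{-1/3}+\Or(n^{-2/3})=\rho_0+\frac{M\gamma^{1/3}}{(1+\gamma)^{4/3}}n^{-1/3}+\Or(n^{-2/3}),
\end{equation*}
using the definition $\beta_{1}=2(1+\gamma)^{2/3}\gamma^{4/3}$. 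Consequently $\rho_+-\tilde\rho_0=\tilde\kappa n^{-1/3}+\Or(n^{-2/3})$, and the positivity assumption $\tilde\kappa>0$ precisely matches the admissible range in Lemma~\ref{lemmaExitPoint}. Applying the lemma yields $\Pb(Z^{\rho_+}(\tilde\gamma^{2}n,n)>0)\geq 1-C\exp(-c\tilde\kappa^{2})$, which, after absorbing the $\Or(n^{-2/3})$ corrections into the constants for $n$ large enough, implies the stated bound (with the weaker ``$\geq 0$'' clause).

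The second inequality is obtained identically: write the endpoint as $(\tilde\gamma^{2}n,n)$ with $\tilde\gamma^{2}=\gamma^{2}+\beta_{1}Mn^{-1/3}$, so that $\tilde\rho_0=\rho_0-M\gamma^{1/3}(1+\gamma)^{-4/3}n^{-1/3}+\Or(n^{-2/3})$ and $\rho_-=\tilde\rho_0-\tilde\kappa n^{-1/3}+\Or(n^{-2/3})$, then invoke the second estimate of Lemma~\ref{lemmaExitPoint}.

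The main (and only non-trivial) point to guard against is the uniformity in $\gamma$: since the shift $\beta_{1}Mn^{-1/3}$ perturbs $\gamma$, the constants $C,c$ produced by Lemma~\ref{lemmaExitPoint} should not degrade. This is guaranteed by the uniformity clause stated in that lemma (constants taken uniform for $\gamma$ in a bounded set), and by choosing $n_0$ large enough so that $\tilde\gamma$ stays in a compact neighborhood of $\gamma$ bounded away from $0$ and $\infty$. The remaining $\Or(n^{-2/3})$ discrepancy between the exact and the approximate densities is of lower order than $n^{-1/3}$ and can be absorbed into an arbitrarily small modification of $\tilde\kappa$, which only affects the constants on the right-hand side.
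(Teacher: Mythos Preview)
Your argument is correct and follows exactly the route of the paper: define $\tilde\gamma$ by $\tilde\gamma^{2}n=\gamma^{2}n\mp\beta_1 M n^{2/3}$, rewrite $\rho_{\pm}=\tfrac{1}{1+\gamma}\pm\kappa n^{-1/3}$ as $\tfrac{1}{1+\tilde\gamma}\pm\tilde\kappa n^{-1/3}$, and invoke Lemma~\ref{lemmaExitPoint} with the perturbed parameters. Your write-up is in fact more explicit than the paper's one-line proof, since you carry out the Taylor expansion yielding $\tilde\kappa=\kappa-M\gamma^{1/3}(1+\gamma)^{-4/3}$ and you spell out why the $\Or(n^{-2/3})$ remainder and the uniformity of the constants in $\gamma$ cause no trouble.
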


\subsection{Universality for TASEP}
The LPP with $\Lflat$ as initial set corresponds to TASEP in continuous time with initial condition $x_k(0)=-\lfloor k/\rho\rfloor$, $k\in\Z$. We have the following universality result for the one-point fluctuations for TASEP with flat initial conditions for any density $\rho\in (0,1)$. For the more general initial condition, in terms of height Assumptions~A and~B rewrite as follows.

\noindent \textbf{Assumption~A:}
\begin{equation}
\lim_{L\to\infty} \frac{h(2v\chi^{1/3}L^{2/3},0)-2v (1-2\rho)\chi^{1/3} L^{2/3}}{2\chi^{2/3}L^{1/3}}={\cal R}(v)=\sqrt{2}\sigma{\cal B}(v),
\end{equation}
weakly on the space of continuous functions on bounded sets, where ${\cal B}$ is a two-sided Brownian motion and $\sigma\geq 0$ a coefficient. The stationary initial condition is $\sigma=1$, while the flat initial condition is $\sigma=0$.

\noindent \textbf{Assumption~B:} For any given $\delta>0$ and $M>0$, there exists a $L_0$ such that for all $L\geq L_0$,
\begin{equation}
\Pb(h(2v\chi^{1/3}L^{2/3},0)-2v (1-2\rho)\chi^{1/3} L^{2/3}\geq -\delta v^2 L^{1/3}\textrm{ for all }|v|\geq M)\geq 1-Q(M),
\end{equation}
with $Q$ independent on $L$ and $\lim_{M\to\infty} Q(M)=0$.

\begin{thm}\label{ThmMainTASEP}
Let $\rho\in (0,1)$ and set $\chi=\rho(1-\rho)$. Then, for any $s\in\R$,
\begin{equation}
\begin{aligned}
&\lim_{t\to\infty}\Pb\left(h((1-2\rho)t+2 w \chi^{1/3} t^{2/3},t)\geq (1-2\chi)t+2w (1-2\rho) \chi^{1/3} t^{2/3}-2 s \chi^{2/3} t^{1/3}\right)\\
&=\Pb\left(\max_{v\in\R} \{{\cal A}_2(v)-(v-w)^2+{\cal R}(v)\}\leq s\right).
\end{aligned}
\end{equation}
\end{thm}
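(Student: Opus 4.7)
My plan is to deduce Theorem~\ref{ThmMainTASEP} from Theorem~\ref{ThmMainLPP} via the TASEP--LPP correspondence~(\ref{eq2.4}). Given TASEP parameters $(t,w,s)$ from the statement, set
\begin{equation*}
j = (1-2\rho)t + 2w\chi^{1/3}t^{2/3}, \qquad h_0 = (1-2\chi)t + 2w(1-2\rho)\chi^{1/3}t^{2/3} - 2s\chi^{2/3}t^{1/3},
\end{equation*}
and $(m,n) = ((j+h_0)/2,(h_0-j)/2)$. By~(\ref{eq2.4}), $\Pb(h(j,t)\geq h_0) = \Pb(L_{\LL\to(m,n)}\leq t)$. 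The arithmetic identity $(1-2\chi)-(1-2\rho)^2 = 2\chi$ gives $(h_0-(1-2\rho)j)/2 = \chi t - s\chi^{2/3}t^{1/3} =: N$; solving the remaining linear relation for the LPP angle parameter yields $(m,n) = E_N(w')$ with $w' \to -w$, and the time identification $t = N/\chi + sN^{1/3}/\chi^{2/3} + O(t^{-1/3})$. We are therefore exactly in the regime of Theorem~\ref{ThmMainLPP}.

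Next I would verify that the height-formulated Assumptions~A and~B translate into the $\LL$-set Assumptions~A and~B of Section~\ref{sec:TASEPandLPP}. Writing $x_k(0) = x_k^{\rm flat}(0) + u_k$ and using $h(j,0) = j - 2\#\{k : x_k(0)\in(0,j]\}$, a mesoscopic computation gives, for $j = 2v\chi^{1/3}L^{2/3}$,
\begin{equation*}
h(j,0) - h^{\rm flat}(j,0) = -2\rho\, u_{-\rho j} + o(L^{1/3}).
\end{equation*}
On the LPP side, unpacking the definitions~(\ref{eqDefSflat})--(\ref{eqDefS}) together with $\mathbf{e}_\rho = ((1-\rho)^2,\rho^2)$ yields $\lambda(v) \approx u_{k(A(v))}/(1-\rho)$ with $k(A(v)) \approx 2\rho\chi^{-1/3}vN^{2/3}$. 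Setting $N = \chi L$ (forced by the time matching of Step~1) and using the elementary identity $(1-\rho)\chi^{-1/3} = \chi^{2/3}/\rho$ (a rewriting of $\chi = \rho(1-\rho)$), the two assumptions describe the same scaling limit up to the reflection $v\mapsto -v$, which preserves the law of $\mathcal{R} = \sqrt{2}\sigma\mathcal{B}$ since $\mathcal{B}$ is a two-sided Brownian motion. The same correspondence turns height-Assumption~B into $\LL$-Assumption~B.

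Theorem~\ref{ThmMainLPP} now applies (the $O(1)$ endpoint discrepancies coming from rounding, the $o(1)$ error in $w'$, and the $O(t^{-1/3})$ error in the time identification are all absorbed by continuity of the limit distribution in $w$ and $s$), giving
\begin{equation*}
\lim_{t\to\infty} \Pb(h(j,t)\geq h_0) = \Pb\Bigl(\max_{v\in\R}\{\mathcal{A}_2(v)-(v+w)^2 + \mathcal{R}(v)\}\leq s\Bigr).
\end{equation*}
A final substitution $v\mapsto -v$ inside the maximum, using (i) stationarity of the Airy$_2$ process (so $\mathcal{A}_2(-\cdot)\stackrel{d}{=}\mathcal{A}_2(\cdot)$), (ii) two-sidedness of $\mathcal{B}$ (so $\mathcal{R}(-\cdot)\stackrel{d}{=}\mathcal{R}(\cdot)$), and (iii) independence of $\mathcal{A}_2$ and $\mathcal{R}$, converts $(v+w)^2$ into $(v-w)^2$ and yields the claimed formula.

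The main subtlety I expect lies in the second step: tracking the $\rho$, $(1-\rho)$ and $\chi$ prefactors carefully enough to confirm that the height-function and LPP formulations of the perturbation $u_k$ produce the same scaling limit with the same coefficient $\sigma$ (and the same tail function $Q$). The rest is a change of parameterization on top of Theorem~\ref{ThmMainLPP}.
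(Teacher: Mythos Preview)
Your proposal is correct and follows exactly the route the paper takes: the paper's own proof is two lines, simply citing the height--LPP correspondence~(\ref{eq2.4}) and Theorem~\ref{ThmMainLPP}, and what you have written is a careful unfolding of that citation. In particular, your observation that the change of variables produces $w'\to -w$ (and hence requires the reflection symmetry of $\mathcal{A}_2$ and of $\mathcal{R}=\sqrt{2}\sigma\mathcal{B}$ to recover the stated form) is a genuine detail that the paper's terse proof glosses over; your handling of it is correct since both processes are reflection-invariant in law and independent.
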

\begin{proof}
The first equality follows from (\ref{1.11}). The rest is a direct consequence of Theorem~\ref{ThmMainLPP} and the relation (\ref{eq2.4}).
\end{proof}

The flat TASEP is the special case ${\cal R}=0$ and the result is independent of $w$ since the Airy$_2$ process is stationary. Thus we have proven the following result, which motivated the study of this paper.
\begin{cor}\label{CorMainTASEP}
Consider TASEP with flat initial condition and density $\rho\in (0,1)$, and set $\chi=\rho(1-\rho)$. Then, for any $s\in\R$,
\begin{equation}
\begin{aligned}
\lim_{t\to\infty}\Pb\left(J(t)\geq \chi t - s \chi^{2/3} t^{1/3}\right)
&=\lim_{t\to\infty}\Pb\left(h((1-2\rho)t,t)\geq (1-2\chi)t-2 s \chi^{2/3} t^{1/3}\right)\\
&=\Pb\left(\max_{v\in\R}\{ {\cal A}_2(v)-v^2\}\leq s\right) = F_{\rm GOE}(2^{2/3} s).
\end{aligned}
\end{equation}
\end{cor}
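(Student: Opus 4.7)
The corollary combines Theorem~\ref{ThmMainTASEP} with two classical ingredients, and my plan is to apply them in the natural order.

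First, I would specialize Theorem~\ref{ThmMainTASEP} to $w=0$ and the flat initial condition $x_k(0)=-\lfloor k/\rho\rfloor$. Since this deterministic initial profile is linear up to $O(1)$, Assumption~A holds with $\sigma=0$ (so ${\cal R}\equiv 0$) and Assumption~B is immediate with $Q(M)\equiv 0$, as the $O(1)$ deviation is eventually dominated by $-\delta v^2 L^{1/3}$ for every $\delta,M>0$ once $L$ is large enough. The conclusion of the theorem then reads
\[
\lim_{t\to\infty}\Pb\bigl(h((1-2\rho)t,t)\ge (1-2\chi)t-2s\chi^{2/3}t^{1/3}\bigr)=\Pb\bigl(\max_{v\in\R}\{{\cal A}_2(v)-v^2\}\le s\bigr),
\]
which is the middle equality of the corollary.

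For the first equality (relating the $h$-event to the $J$-event) I use the height-to-current formula~\eqref{1.11}: at $j=(1-2\rho)t$, $h(j,t)=2J(t)+\sum_{i=1}^{j}(1-2\eta_i(t))$, and the arithmetic identity $(1-2\chi)-(1-2\rho)^2=2\chi$ lines up the constants on both sides. The cleanest way to conclude that the two limiting probabilities agree is via the LPP representation~\eqref{eq2.4}: the $J$-event translates into an LPP event at the near-diagonal endpoint $m=n=\chi t-s\chi^{2/3}t^{1/3}$, whereas the $h$-event corresponds to the off-diagonal endpoint $((1-\rho)^2 t,\rho^2 t)$ (minus the same subleading $s\chi^{2/3}t^{1/3}$ shift). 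With ${\cal R}\equiv 0$, the right-hand side of Theorem~\ref{ThmMainLPP} is independent of the parameter $w$ by stationarity of the Airy$_2$ process, so both limits collapse to the same $F_{\rm GOE}(2^{2/3}s)$.

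Finally, the third equality $\Pb(\max_{v\in\R}\{{\cal A}_2(v)-v^2\}\le s)=F_{\rm GOE}(2^{2/3}s)$ is Johansson's variational identity~\eqref{eq1} from~\cite{Jo03} and requires no further argument. There is no substantive new obstacle here: all of the real work sits inside Theorem~\ref{ThmMainLPP}, and the corollary amounts to the specialization ${\cal R}\equiv 0$, $w=0$ together with the height-to-current relation and Johansson's classical identity.
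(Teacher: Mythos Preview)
Your proposal is correct and follows the same route as the paper: specialize Theorem~\ref{ThmMainTASEP} to the flat case ${\cal R}\equiv 0$, use stationarity of the Airy$_2$ process to remove the $w$-dependence, and invoke Johansson's identity~\eqref{eq1}. One small caveat: the direct use of~\eqref{1.11} at $j=(1-2\rho)t$ that you sketch first does not by itself yield the $J$-to-$h$ equality, since $\sum_{i=1}^{(1-2\rho)t}(1-2\eta_i(t))-(1-2\rho)^2 t$ is random on the $t^{1/3}$ scale; the LPP/$w$-independence argument you give next (equivalently, the approximate translation invariance of $\Lflat$ along its own direction, which takes the endpoint $(\chi t,\chi t)$ to $E_N(0)$) is exactly what closes this, and is what the paper intends.
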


\section{Comparison with stationary LPP and proof of Theorem~\ref{ThmTightness}}\label{sect:tightness}
In this section we will prove tightness of the process $L^{\rm resc,h}_n$. This mainly follows the approach of Cator and Pimentel~\cite{CP15b}. The key observation in~\cite{CP15b} is that the increments of the LPP with end-points on a horizontal line can be bounded by the increments of the LPP for the stationary case on the set of events where the ``exit point'' is on the right or the left of the origin. Then the idea is to consider stationary LPP with slightly higher/lower density so that the given exit point events are highly probable and at the same time the increments of the LPP are controlled by the ones in the stationary LPPs. In~\cite{CP15b} the case of the Hammersley process was studied in details and it was stated the result for the exponential random variable along the diagonal only, i.e.\ $\gamma=1$. The proof of the latter is left to the reader as it was mentioned that it is similar to the case of the Hammersley.

We have a few reasons to present the details for the result with generic densities:\\[0.5em]
(a) here we consider the space of continuous functions instead of the c\`adl\`ag functions and there are some minor twists which have to be taken into account for generic density $\rho\neq 1/2$;\\[0.5em]
(b) we get a much stronger bound for the exit point distributions with respect to~\cite{CP15b} (see Lemma~\ref{lemmaExitPoint});\\[0.5em]
(c) we derive an estimate on the increments, which is not needed for proving tightness, but it is the key for the control of the probability that the maximizer of the LPP from $\LL$ to $E_N(w)$ is localized: the derivation of this result is noticeably simplified with respect to the previous papers~\cite{CLW16} (they made use of a Brownian-Gibbs property) and~\cite{CFS16} (an ad-hoc comparison with half-line problem with slope $-1$ was used).

\subsection{Stationary LPP and exit points}\label{sectExitPoints}
Let us now explain what we mean with stationary LPP with density $\rho\in(0,1)$ and report a result of Bal{\'a}zs, Cator and Sepp{\"a}l{\"a}inen~\cite{BCS06}.
Consider the LPP as given by (\ref{stationaryboundary}). We denote by $L^\rho(m,n)$ the last passage percolation from $(0,0)$ to $(m,n)$ in this setting, while we use $L(m,n)$ for the last passage percolation from $(0,0)$ to $(m,n)$ if we set $\omega_{i,0}=\omega_{0,j}=0$.

The boundary conditions \eqref{stationaryboundary} correspond to a TASEP starting from the stationary Bernoulli($\rho$) measure, conditioned on $\eta_0(0)=0$ and $\eta_1(0)=1$. Let $P_0(t)$ be the position at time $t$ of the particle which started in $1$ at time $0$, and $H_0(t)$ be the position at time $t$ of the hole which started in $0$ at time $0$. It was shown in Corollary~3.2 of~\cite{BCS06} (as a corollary of Burke's theorem~\cite{Bur56}) that $P_0(t)-1$ and $-H_0(t)$ are two independent Poisson processes with jump rates $1-\rho$ and $\rho$. They extended the result to get independent increments also in the bulk of the system. The result we will use is the following:
\begin{lem}[Special case of Lemma~4.2 of~\cite{BCS06}]\label{LemmaStatIncrements}
Fix any $n\geq 1$. Then the increments
\begin{equation}
\{L^{\rho}(m+1,n)-L^{\rho}(m,n),m\geq 1\}
\end{equation}
are are i.i.d.\ exponential random variables with parameter $1-\rho$.
\end{lem}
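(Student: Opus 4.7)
Since the result is presented as a special case of Lemma~4.2 of~\cite{BCS06}, the cleanest proof is a direct appeal to that reference. For completeness, I outline the underlying reasoning, which is a Burke-type exchange argument for exponential random variables combined with the standard LPP recursion.

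The plan is induction on $n$. The base case $n=0$ is definitional: $L^{\rho}(m,0)$ is the partial sum of the axis weights $\omega_{i,0}\sim{\rm Exp}(1-\rho)$, so the horizontal increments are trivially i.i.d. For the inductive step, introduce the horizontal and vertical increments $H_i^{(j)}:=L^{\rho}(i,j)-L^{\rho}(i-1,j)$ and $V_j^{(i)}:=L^{\rho}(i,j)-L^{\rho}(i,j-1)$. From the max-plus recursion $L^{\rho}(i,j)=\omega_{i,j}+\max(L^{\rho}(i-1,j),L^{\rho}(i,j-1))$ a short computation yields the queueing-type relations
\[
H_i^{(j)}=\bigl(H_i^{(j-1)}-V_j^{(i-1)}\bigr)^{+} + \omega_{i,j},\qquad V_j^{(i)}=\bigl(V_j^{(i-1)}-H_i^{(j-1)}\bigr)^{+} + \omega_{i,j}.
\]

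The elementary ingredient is Burke exchange for exponentials: if $X\sim{\rm Exp}(a)$, $Y\sim{\rm Exp}(b)$, $Z\sim{\rm Exp}(a+b)$ are independent, then the pair $\bigl((X-Y)^{+}+Z,\,(Y-X)^{+}+Z\bigr)$ consists of independent ${\rm Exp}(a)$ and ${\rm Exp}(b)$ variables. Taking $a=1-\rho$, $b=\rho$, and $Z=\omega_{i,j}$ (which indeed has rate $1=a+b$), one applies this cell-by-cell along any linear extension of the lattice partial order, propagating the invariant: \emph{the horizontal increments along row $j$ are i.i.d.\ ${\rm Exp}(1-\rho)$, the vertical increments along column $i$ are i.i.d.\ ${\rm Exp}(\rho)$, and both are jointly independent of the untouched bulk weights $\{\omega_{a,b}\}$}.

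The main obstacle is bookkeeping joint independence rather than the distributional step itself: each application of Burke exchange is localized to three inputs at one cell, and one must verify that independence from all previously processed increments is preserved. This is automatic, since the triple $(H_i^{(j-1)},V_j^{(i-1)},\omega_{i,j})$ is, by the inductive hypothesis, independent of all previously produced increments, and applying a fixed measurable function to these three inputs preserves independence. After processing every cell with $j\leq n$ along such a linear extension, the horizontal increments $\{L^{\rho}(m+1,n)-L^{\rho}(m,n):m\geq 1\}$ are i.i.d.\ ${\rm Exp}(1-\rho)$, which is the claim.
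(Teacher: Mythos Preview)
The paper does not supply its own proof of this lemma; it is stated with an explicit attribution to Lemma~4.2 of~\cite{BCS06} and used as a black box. Your proposal correctly recognizes this and goes further by sketching the Burke-type induction, which is indeed the argument in~\cite{BCS06}. So at the level of ``does it match the paper'', yes: both treat the statement as a citation, and your added outline is the standard one.

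One point in your sketch deserves sharpening. The invariant you propagate is stated as ``horizontal increments along row $j$ are i.i.d.\ ${\rm Exp}(1-\rho)$, vertical increments along column $i$ are i.i.d.\ ${\rm Exp}(\rho)$, and both are jointly independent of the untouched bulk weights''. As written this does not assert that $H_i^{(j-1)}$ and $V_j^{(i-1)}$ are independent of \emph{each other}, which is precisely what you need to apply the Burke exchange at cell $(i,j)$. The correct invariant (and the one proved in~\cite{BCS06}) is that along the current down-right frontier the full collection of increments---a mixture of horizontal and vertical ones---is mutually independent with the stated exponential parameters, and independent of the unprocessed $\omega$'s. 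With that formulation, the triple $(H_i^{(j-1)},V_j^{(i-1)},\omega_{i,j})$ is independent by hypothesis, Burke exchange produces an independent pair $(H_i^{(j)},V_j^{(i)})$, and since this pair is a measurable function of the triple it remains independent of the other frontier increments. This closes the induction cleanly; your last paragraph gestures at this but the invariant as you wrote it is not quite strong enough to carry it.
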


With this definition we have the following lower and upper bounds in the increments of the process $m\mapsto L(m,n)$ that we want to study:
\begin{lem}[Lemma~1 of~\cite{CP15b}] \label{LemmaIncrementsBounds}
 Let $0\le m_1\le m_2$. Then if $Z^{\rho}(m_1,n)\geq 0$, it holds
 \begin{equation}
  L(m_2,n)-L(m_1,n)\le L^{\rho}(m_2,n)-L^{\rho}(m_1,n),
  \label{upperbound}
 \end{equation}
while, if $Z^{\rho}(m_2,n)\leq 0$, then we have
 \begin{equation}
  L(m_2,n)-L(m_1,n)\ge L^{\rho}(m_2,n)-L^{\rho}(m_1,n).
  \label{lowerbound}
 \end{equation}
\end{lem}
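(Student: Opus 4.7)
The plan is to prove both inequalities by a deterministic path-exchange (surgery) argument, crucially exploiting that $L$ and $L^\rho$ share exactly the same random weights $\omega_{i,j}$ on the strict interior $\{(i,j):i,j\geq 1\}$ and differ only on the two coordinate axes. In particular, any up-right path lying entirely in the interior carries the same weight under either prescription, so a tail swap performed at an interior lattice point is ``weight-neutral'': the boundary contribution depends only on the head, not on the tail.

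For the upper bound \eqref{upperbound}, I would take $\pi^\rho$ to be a maximizer of $L^\rho(m_1,n)$ and $\pi$ a maximizer of $L(m_2,n)$. The hypothesis $Z^\rho(m_1,n)\geq 0$ forces $\pi^\rho$ to leave the boundary on the $x$-axis at $(Z^\rho(m_1,n),0)$ and enter the interior at $(Z^\rho(m_1,n),1)$. The key geometric step is to produce a common lattice point $q$ lying strictly in the interior that is shared by $\pi^\rho$ and $\pi$. Given such a $q$, I would form the two swapped paths: the head of $\pi^\rho$ from $(0,0)$ to $q$ concatenated with the tail of $\pi$ from $q$ to $(m_2,n)$, and symmetrically the head of $\pi$ concatenated with the tail of $\pi^\rho$ terminating at $(m_1,n)$. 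Because both post-$q$ tails lie in the interior, the $L^\rho$-weight of the first new path and the $L$-weight of the second sum to exactly $L^\rho(m_1,n)+L(m_2,n)$; on the other hand, they are respectively bounded above by $L^\rho(m_2,n)$ and $L(m_1,n)$. Rearranging yields \eqref{upperbound}.

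The lower bound \eqref{lowerbound} is obtained symmetrically: now the hypothesis $Z^\rho(m_2,n)\leq 0$ makes the $L^\rho$-maximizer to $(m_2,n)$ exit on the $y$-axis, and performing the analogous swap at an interior common point with an $L$-maximizer to $(m_1,n)$ reverses the roles and gives the reversed inequality.

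The step I expect to be the main obstacle is producing the common interior lattice point $q$. Both maximizers start at $(0,0)$ but are free to separate immediately, so one must rule out configurations in which $\pi^\rho$ and $\pi$ never meet again in the strict interior. The sign hypothesis on $Z^\rho$ is exactly what pins down through which axis $\pi^\rho$ leaves the boundary, and combined with the endpoint ordering $m_1\leq m_2$ this should force an interior crossing by a planar argument for up-right paths. Some case analysis is nonetheless required: in configurations where the $L$-maximizer $\pi$ lingers on the $x$-axis past $(Z^\rho(m_1,n),0)$ (which is permitted because the boundary weights vanish identically in the $L$-model), one can replace $\pi$ by another $L$-maximizer that enters the interior no later than $\pi^\rho$'s exit column, restoring the crossing. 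This is the deterministic ingredient underlying the subsequent stochastic estimates, so it is imperative that the geometric claim hold pointwise in $\omega$.
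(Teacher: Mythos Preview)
The paper does not supply its own proof of this lemma; it is quoted as Lemma~1 of~\cite{CP15b} and used as a black box. Your path-swapping argument is precisely the standard proof, and is the one given in~\cite{CP15b}, so your reconstruction is on target.

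One comment on the ``main obstacle'' you flag in the last paragraph. The configuration you worry about --- the $L$-maximizer $\pi$ lingering on the $x$-axis past column $z=Z^\rho(m_1,n)$ --- is in fact a.s.\ vacuous, which eliminates most of the case analysis. Since the bulk weights $\omega_{i,j}$ ($i,j\geq 1$) are a.s.\ strictly positive while the axis weights vanish in the $L$-model, any up-right path that stays on the $x$-axis to column $k\geq 2$ before going up is \emph{strictly} improved by rerouting through $(1,1)$: replacing $(0,0)\to(1,0)\to\cdots\to(k,0)\to(k,1)$ by $(0,0)\to(1,0)\to(1,1)\to(2,1)\to\cdots\to(k,1)$ adds $\omega_{1,1}+\cdots+\omega_{k-1,1}>0$. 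The symmetric argument applies on the $y$-axis. Hence a.s.\ every $L$-maximizer passes through $(1,1)$. Your proposed fix (``replace $\pi$ by another $L$-maximizer that enters the interior no later than $\pi^\rho$'s exit column'') is therefore unnecessary and, as phrased, slightly off: the rerouted path is not merely another maximizer but a strict improvement, so the problematic configuration never occurs for a genuine maximizer in the first place. With $\pi$ through $(1,1)$ and $\pi^\rho$ entering the interior at $(z,1)$ with $z\geq 1$ (note $Z^\rho\neq 0$ always, so $Z^\rho\geq 0$ means $z\geq 1$), the planar crossing in $\{i,j\geq 1\}$ follows directly from the endpoint ordering $m_1\leq m_2$, and your swap goes through cleanly.
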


From the law of large numbers results one easily obtains that $Z^\rho(\gamma^2n,n)$ is typically around $0$ (it will fluctuates over a $n^{2/3}$ scale), if one chooses $\rho=1/(\gamma+1)$. Therefore we set
\begin{equation}
\rho_\pm=\rho_0\pm \kappa n^{-1/3}\textrm{ with }\rho_0=\frac{1}{\gamma+1}.
\end{equation}
The choice of $n^{-1/3}$ is due to the fact that the increments of the scaled process are just increased/decreased by a finite amount (proportional to $\kappa$), but on the other hand $\Pb(Z^{\rho_+}(\gamma^2n,n)>0)$ and $\Pb(Z^{\rho_-}(\gamma^2n,n)<0)$ goes to $1$ as $\kappa\to\infty$. The first step is to get an estimate on these probabilities.

\subsection{Bounds on exit points}
Now we want to derive a bound on $\Pb(Z^{\rho_+}(\gamma^2n,n)>0)$ and on $\Pb(Z^{\rho_-}(\gamma^2n,n)<0)$. The last passage time $L^\rho$ is the maximum between the last passage time from $(0,1)$ and the one from $(1,0)$, since any up-right path from $(0,0)$ has to go through one of these points. These LPP are denoted by
\begin{equation}
L^\rho_{-}(m,n)=L_{(0,0)\to(1,0)\to(m,n)},\quad L^\rho_{|}(m,n)=L_{(0,0)\to (0,1)\to(m,n)}.
\end{equation}
In terms of these two random variables, we have
\begin{equation}
\begin{aligned}
\Pb(Z^{\rho_+}(\gamma^2n,n)>0)&=\Pb\left(L^{\rho_+}_{-}(\gamma^2 n,n)>L^{\rho_+}_{|}(\gamma^2 n,n)\right),\\
\Pb(Z^{\rho_-}(\gamma^2n,n)<0)&=\Pb\left(L^{\rho_-}_{|}(\gamma^2 n,n)>L^{\rho_-}_{-}(\gamma^2 n,n)\right).
\end{aligned}
\end{equation}

Now we are ready to prove Lemma~\ref{lemmaExitPoint} and Corollary~\ref{corExitPoint}.

\begin{proof}[Proof of Lemma~\ref{lemmaExitPoint}]
By symmetry of the problem under the exchanges $\gamma \to 1/\gamma$ and $\rho\to 1-\rho$ it is enough to deal with the first estimate. We are going to prove that $\Pb(Z^{\rho_+}(\gamma^2n,n)<0)\leq C \exp(-c \kappa^2)$.

First notice that for any $x\in\R$ we have
\begin{equation}\label{eq3.11}
\begin{aligned}
\Pb(Z^{\rho_+}(\gamma^2n,n)<0) &= \Pb\left(L^{\rho_+}_{-}(\gamma^2 n,n)<L^{\rho_+}_{|}(\gamma^2 n,n)\right)\\
&\leq \Pb\left(L^{\rho_+}_{-}(\gamma^2 n,n)\leq x\right) + \Pb\left(L^{\rho_+}_{|}(\gamma^2 n,n)>x\right).
\end{aligned}
\end{equation}
Further, since for $\kappa>0$ we have $\rho_+>\rho_0$, and thus $\E(\omega_{0,i})=1/\rho_+<1/\rho_0$, implying
\begin{equation}\label{eq3.12}
\Pb\left(L^{\rho_+}_{|}(\gamma^2 n,n)>x\right)\leq \Pb\left(L^{\rho_0}_{|}(\gamma^2 n,n)>x\right).
\end{equation}
The bounds of Lemma~\ref{LemmaBoundDistributions} below with $x=(1+\gamma)^2 n +a \kappa^2 \beta_2 n^{1/3}$ (where we can choose any value $a\in (0,(1+\gamma)^{8/3}\gamma^{-2/3})$) together with (\ref{eq3.11}) and (\ref{eq3.12}) give the desired result.
\end{proof}

\begin{proof}[Proof of Corollary~\ref{corExitPoint}]
Setting $\tilde\gamma^2 n = \gamma^2 n \pm \beta_1 M n^{2/3}$ and $\frac{1}{1+\gamma}\pm\kappa n^{-1/3}=\frac{1}{1+\tilde\gamma}\pm\tilde \kappa n^{-1/3}$ we find the value of $\tilde\kappa$. Then the bound follows by Lemma~\ref{lemmaExitPoint}.
\end{proof}

\begin{lem}\label{LemmaBoundDistributions}
Let $x=(1+\gamma)^2 n +a \kappa^2 \beta_2 n^{1/3}$ with $a\in (0,(1+\gamma)^{8/3}\gamma^{-2/3})$. Then, uniformly for $n$ large enough, we have
\begin{equation}
\begin{aligned}
\Pb\left(L^{\rho_0}_{|}(\gamma^2 n,n)>x\right)&\leq C e^{-c \kappa^2},\\
\Pb\left(L^{\rho_+}_{-}(\gamma^2 n,n)\leq x\right) &\leq C e^{-c \kappa^3},
\end{aligned}
\end{equation}
for some $\kappa$-independent constants $C,c\in (0,\infty)$ ($c$ is depending on $a$).
\end{lem}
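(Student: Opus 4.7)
Both estimates exploit the exit-point decomposition of the LPP through a boundary edge. With $Y_k=\sum_{j=1}^{k}\omega_{0,j}$ (sum of $k$ i.i.d.\ Exp$(\rho)$) and $X_k=\sum_{i=1}^{k}\omega_{i,0}$ (sum of $k$ i.i.d.\ Exp$(1-\rho)$), each independent of the interior weights, one has $L^{\rho}_{|}(m,n)=\max_{k\geq 1}\{Y_k+L_{(1,k)\to(m,n)}\}$ and $L^{\rho}_{-}(m,n)=\max_{k\geq 1}\{X_k+L_{(k,1)\to(m,n)}\}$. For the upper-tail bound~(i), my plan is to use the trivial comparison $L^{\rho_0}_{|}(\gamma^2 n,n)\leq L^{\rho_0}(\gamma^2 n,n)$ and apply an upper-tail estimate for the stationary LPP. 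At the characteristic slope $\rho_0=1/(1+\gamma)$ one has $\E[L^{\rho_0}(\gamma^2 n,n)]=n/\rho_0+\gamma^2 n/(1-\rho_0)=(1+\gamma)^2 n$, so the deterministic part of $x$ matches the mean exactly, and the required stretched-exponential bound
\begin{equation*}
\Pb\bigl(L^{\rho_0}(\gamma^2 n,n)>(1+\gamma)^2 n+s n^{1/3}\bigr)\leq Ce^{-cs^{3/2}}
\end{equation*}
(uniform in $n$ large and $s>0$) is a quantitative strengthening of the variance bound of~\cite{BCS06} obtainable via the Fredholm/Laplace-transform representation of stationary exponential LPP. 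Taking $s=a\kappa^2\beta_2$ yields $Ce^{-c\kappa^3}$, which is absorbed into $Ce^{-c\kappa^2}$ for $\kappa\geq 1$.

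For the lower-tail bound~(ii), I pick a single exit point $k_0=b\kappa n^{2/3}$ with $b\in(0,2\gamma(1+\gamma)^2]$ to be tuned, and use the lower bound $L^{\rho_+}_{-}(\gamma^2 n,n)\geq X_{k_0}+L_{(k_0,1)\to(\gamma^2 n,n)}$ to get
\begin{equation*}
\Pb(L^{\rho_+}_{-}(\gamma^2 n, n)\leq x)\leq \Pb(X_{k_0}+L_{(k_0,1)\to(\gamma^2 n,n)}\leq x),
\end{equation*}
with the two summands on the right independent. A second-order Taylor expansion of the two means in the small parameters $(\rho_+-\rho_0)n^{1/3}=\kappa$ and $k_0/n=b\kappa n^{-1/3}$ shows that the $O(\kappa n^{2/3})$ contributions cancel and
\begin{equation*}
\E[X_{k_0}+L_{(k_0,1)\to(\gamma^2 n,n)}]-(1+\gamma)^2 n =\frac{\kappa^2 n^{1/3}}{4\gamma^3}\bigl(4b\gamma(1+\gamma)^2-b^2\bigr)+O(1)+O(\kappa^3).
\end{equation*}
The quadratic in $b$ is maximised at $b^{\ast}=2\gamma(1+\gamma)^2$ with value $4\gamma^2(1+\gamma)^4$; dividing by $4\gamma^3\beta_2$ gives the coefficient $(1+\gamma)^{8/3}\gamma^{-2/3}$, precisely the supremum of the admissible range of $a$. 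Hence for any $a$ strictly below this endpoint, $b$ can be chosen so that this mean exceeds $x$ by at least $c'\kappa^2 n^{1/3}$ with $c'=c'(a)>0$. Splitting the deficit equally between the two independent summands reduces the bound to two pieces: (a)~a Gamma lower tail for $X_{k_0}$ (variance $\Theta(\kappa n^{2/3})$, deviation $\Theta(\kappa^{3/2})$ standard deviations, so the Gaussian/Cram\'er regime gives $Ce^{-c\kappa^3}$); and (b)~the lower tail of a bulk LPP at near-characteristic direction (the standard $e^{-cs^3}$ GUE Tracy--Widom-type bound with $s=\Theta(\kappa^2)$ yields $e^{-c\kappa^6}$). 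Summation gives $Ce^{-c\kappa^3}$.

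The main technical obstacle I foresee is at step~(i): the $s^{3/2}$ form of the stationary upper tail is stronger than the variance bound of~\cite{BCS06} and relies on the exact Laplace-transform representation of stationary LPP; if one prefers to avoid that input, an exit-point union bound on $L^{\rho_0}_{|}$ combined with Chernoff bounds for the $Y_k$ and standard GUE upper-tail estimates for the bulk LPP should still give the weaker $Ce^{-c\kappa^2}$ that the statement demands. For step~(ii), the delicate point is the Taylor expansion: the surviving $\kappa^2 n^{1/3}$ coefficient must be tracked precisely so that its maximum over $b$ matches $(1+\gamma)^{8/3}\gamma^{-2/3}$, and the quoted bulk LPP concentration has to be uniform in the perturbation $k_0=O(\kappa n^{2/3})$ of the starting point and in $\gamma$ over a bounded set, as is needed to deduce Corollary~\ref{corExitPoint} and to feed the tightness argument.
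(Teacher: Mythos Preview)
Your argument for the second inequality is essentially identical to the paper's: both pick an exit point at $\xi_0 n^{2/3}$ with $\xi_0=2\gamma(1+\gamma)^2\kappa$ (your $b^\ast$), split $L^{\rho_+}_{-}\geq X_{k_0}+L_{(k_0,1)\to(\gamma^2 n,n)}$ into an independent Gamma piece and a bulk LPP piece, and bound each tail separately. Your Taylor expansion and the identification of the endpoint $(1+\gamma)^{8/3}\gamma^{-2/3}$ are correct and match the paper's computation exactly. (Minor remark: the paper invokes only the $e^{-c|s|^{3/2}}$ lower-tail bound of Proposition~\ref{PropBounds}(c), which already gives $e^{-c\kappa^3}$; your $e^{-cs^3}$ is the sharper Tracy--Widom tail and is not needed.)

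For the first inequality your route differs from the paper's. The paper does \emph{not} pass through the full stationary LPP $L^{\rho_0}$; instead it treats $L^{\rho_0}_{|}$ directly as a rank-one perturbed LPP and uses the Fredholm determinant representation of Baik--Ben~Arous--P\'ech\'e~\cite{BBP06}. The bound $1-\det(\Id-K_n)\leq Ce^{-c\kappa^2}$ then follows from trace-norm/Hilbert--Schmidt estimates on the kernel and super-exponential Airy decay. This has the advantage of being entirely self-contained within the paper's cited toolbox. Your approach via $L^{\rho_0}_{|}\leq L^{\rho_0}$ is valid, but the uniform-in-$n$ upper-tail bound $\Pb(L^{\rho_0}(\gamma^2 n,n)>(1+\gamma)^2 n+sn^{1/3})\leq Ce^{-cs^{3/2}}$ for stationary LPP at the characteristic direction is an external input not among the paper's references (it is true, but its clean form postdates~\cite{BCS06}). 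You correctly flag this as the main obstacle; your fallback exit-point union bound would also work and is closer in spirit to what the paper does elsewhere, though the paper's BBP06 route is cleaner here since the rank-one structure of $L^{\rho_0}_{|}$ is exactly what~\cite{BBP06} was built for.
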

\begin{proof}
Denoting $L^{\rho_0,\rm resc}:=\frac{L^{\rho_0}_{|}(\gamma^2 n,n)-(1+\gamma)^2 n}{\beta_2 n^{1/3}}$, the first inequality becomes an estimate on $1-\Pb(L^{\rho_0,\rm resc}\leq a\kappa^2)$. The distribution of $L^{\rho_0,\rm resc}$ has been studied in~\cite{BBP06} in the framework of sample covariance matrices. One can use the connection of this LPP to a rank-one problem in sample covariance matrices (see Section 6 of~\cite{BBP06}) to recover the result. Let us explain how it goes.

From (62) of~\cite{BBP06} we have that
\begin{equation}
\Pb(L^{\rho_0,\rm resc}\leq \xi)=\det\left(\Id-K_n\right)_{L^2(\R_+)}
\end{equation}
where $K_n$ is a trace-class operator acting on $L^2(\R_+)$. The integral kernel of $K_n$ can be expressed as
\begin{equation}
K_n(u,v)=\int_{\R_+}d\lambda H_n(u,\lambda)J_n(\lambda,v),
\end{equation}
where $H_n(u,v)={\cal H}(\xi+u+v)$ and $J_n(u,v)={\cal J}(\xi+u+v)$ with $\mathcal{H}$, $\mathcal{J}$ given in (93)-(96) of~\cite{BBP06}. Using the triangular inequality and a standard inequality on Fredholm determinants (see e.g.\ Theorem~3.4 of~\cite{Sim00}) we have
\begin{equation}\label{eq3.15}
\begin{aligned}
|1-\det(\Id-K_n)|&\leq |1-\det(1-K_\infty)| + |\det(1-K_\infty)-\det(1-K_n)| \\
&\leq (\|K_\infty\|_1+\|K_\infty-K_n\|_1) \exp(\|K_\infty\|_1+\|K_n\|_1+1).
\end{aligned}
\end{equation}
The limits of $\cal H$ and $\cal J$ are denoted by ${\cal H}_\infty$ and ${\cal J}_\infty$ and they are given in (120) and (122) of~\cite{BBP06}. For $k=1$ ${\cal H}_\infty(u)=e^{-\e u} \int_{\R_+} \Ai(\xi+\lambda+u)d\lambda$ and ${\cal J}_\infty(u)=e^{\e u} \Ai'(\xi+u)$ with $\e>0$ being any small constant. Using triangular inequalities and the identity $\|A B\|_1\leq \|A\|_{\rm HS} \|B\|_{\rm HS}$ (see e.g.\ Theorem VI.22 of~\cite{RS78I}) we can bound each of the norms in (\ref{eq3.15}) by a finite sum of product of two of the following Hilbert-Schmidt norms,
\begin{equation}
\|H_\infty\|_{\rm HS},\quad  \|J_\infty\|_{\rm HS}, \quad \|H_\infty-H_n\|_{\rm HS},\quad \|J_\infty-J_n\|_{\rm HS},
\end{equation}
As a function of $\xi$, the latter two have exponential bounds (see Proposition~3.1 of~\cite{BBP06}) uniformly for $n$ large enough, while the first two have (super-)exponential decay from the known asymptotics of the Airy functions (e.g., $|\Ai(x)|\leq e^{-x}$ and $|\Ai'(x)|\leq e^{-x}$, for all $x\in\R$).

To prove the second inequality, it is enough to have a bound on the probability for a lower bound for $L^{\rho_+}_{-}$. For any choice of $\xi_0>0$, we have
\begin{equation}
\begin{aligned}
L^{\rho_+}_{-}(\gamma^2 n,n) &\geq L^{\rho_+}(\xi_0 n^{2/3},0)+L^{\rho_+}_{(\xi_0 n^{2/3},0)\to (\gamma^2 n,n)}\\
&\geq L^{\rho_+}(\xi_0 n^{2/3},0)+L_{(\xi_0 n^{2/3},0)\to (\gamma^2 n,n)},
\end{aligned}
\end{equation}
where the $L$ without $\rho_+$ means the LPP with all $\omega$'s to be ${\rm Exp}(1)$. Then
\begin{equation}\label{eq3.19}
\Pb\left(L^{\rho_+}_{-}(\gamma^2 n,n)\leq x\right) \leq \Pb\left(L^{\rho_+}(\xi_0 n^{2/3},0)+L_{(\xi_0 n^{2/3},0)\to (\gamma^2 n,n)}\leq x\right).
\end{equation}
Let us see what is a good choice for $\xi_0$. The estimate from the law of large numbers gives
\begin{equation}\label{eq3.22}
L^{\rho_+}(\xi_0 n^{2/3},0) \simeq \xi_0 n^{2/3}/(1-\rho_+)=\tfrac{1+\gamma}{\gamma}\xi_0 n^{2/3}+\tfrac{(1+\gamma)^2}{\gamma^2} \xi_0\kappa n^{1/3}+O(1)
\end{equation}
and
\begin{equation}\label{eq3.23}
L_{(\xi_0 n^{2/3},0)\to (\gamma^2 n,n)} \simeq \left(\sqrt{n}+\sqrt{\gamma^2n-\xi_0 n^{2/3}}\right)^2=(1+\gamma)^2 n-\tfrac{1+\gamma}{\gamma}\xi_0 n^{2/3}-\frac{\xi_0^2}{4\gamma^3}n^{1/3}+O(1).
\end{equation}
The sum of (\ref{eq3.22}) and (\ref{eq3.23}) (up to $O(n^{1/3})$) is maximal for $\xi_0=2\gamma(1+\gamma)^2\kappa$, which is the value that we choose.
Let us define the rescaled LPP by
\begin{equation}
\begin{aligned}
L^{\rm resc}_{-}&=\frac{L^{\rho_+}(\xi_0 n^{2/3},0)-\left(\frac{1+\gamma}{\gamma}\xi_0 n^{2/3}+\frac{(1+\gamma)^2}{\gamma^2} \xi_0\kappa n^{1/3}\right)}{n^{1/3}},\\
L^{\rm resc}_{\rm bulk}&=\frac{L_{(\xi_0 n^{2/3},0)\to (\gamma^2 n,n)}-\left((1+\gamma)^2 n-\frac{1+\gamma}{\gamma}\xi_0 n^{2/3}-\frac{\xi_0^2}{4\gamma^3}n^{1/3}\right)}{n^{1/3}}
\end{aligned}
\end{equation}

Since $x=(1+\gamma)^2 n +a \kappa^2 \beta_2 n^{1/3}$, we have that
\begin{equation}
(\ref{eq3.19})\leq \Pb\left(L^{\rm resc}_{-}+L^{\rm resc}_{\rm bulk}\leq -\tilde s\right)\leq \Pb\left(L^{\rm resc}_{-}\leq -\tilde s/2\right)+\Pb\left(L^{\rm resc}_{\rm bulk}\leq -\tilde s/2\right)
\end{equation}
with $\tilde s=\left((1+\gamma)^4/\gamma-a\beta_2\right)\kappa^2$.

For any $a\in (0,(1+\gamma)^{8/3}\gamma^{-2/3})$ we have $\tilde s>0$.  Then, uniformly for $n$ large enough, by Proposition~\ref{PropBounds}(c) we have\footnote{The constant $c$ is not the same as in Proposition~\ref{PropBounds}(c), due to the $1/2$ term and the fact that $L^{\rm resc}_{\rm bulk}$ converges to a GUE Tracy-Widom distribution once divided by $\beta_2$.}
\begin{equation}
\Pb\left(L^{\rm resc}_{\rm bulk}\leq -\tilde s/2\right)\leq C e^{-c \tilde s^{3/2}}= C e^{-\tilde c \kappa^3}
\end{equation}
for some constants $C,c,\tilde c\in (0,\infty)$.

To bound the distribution of $L^{\rm resc}_{-}$, note that $L^{\rho_+}(\xi_0 n^{2/3},0)$ is a sum of $\lfloor\xi_0 n^{2/3}\rfloor$ i.i.d.\ random variables ${\rm Exp}(1-\rho_+)$. Let $X_i$ i.i.d.\ ${\rm Exp}(1-\rho_+)$ random variables. Consider the centered random variables $Y_i=1/(1-\rho_+)-X_i$. Set $\hat s=\tilde s n^{1/3}/2$ and $N=\lfloor \xi_0 n^{2/3}\rfloor$. Then by the exponential Tchebishev inequality,
\begin{equation}\label{eq3.25}
\Pb\left(L^{\rm resc}_{-}\leq -\tilde s/2\right)=\Pb\bigg(\sum_{i=1}^{N} Y_i\geq \hat s\bigg)\leq\inf_{t\ge0} e^{-\hat s t}\left(\mathbb{E}\left(e^{t Y_1}\right)\right)^N.
\end{equation}
We have $\mathbb{E}\left(e^{t Y_1}\right)=e^{t/(1-\rho_+)}/(1+t/(1-\rho_+))$ and thus $(\ref{eq3.25})\leq \exp(\inf_{t\geq 0} I(t))$ with $I(t)=N t/(1-\rho_+)+N \ln((1-\rho_+)/(t+1-\rho_+))-\hat s t$. A simple computation gives
\begin{equation}\label{3.26}
\begin{aligned}
\inf_{t\geq 0} I(t)&=\hat s (1-\rho_+)+N \ln(1-\hat s (1-\rho_+)/N)\\
&=-\frac{\tilde s^2\gamma^2}{8\xi_0(1+\gamma)^2}+\Or(n^{-1/3})\leq -\hat c \kappa^3,
\end{aligned}
\end{equation}
for some constant $\hat c$ (which can be taken independent on $n\geq n_0$, $n_0$ large enough), since $\xi_0\sim\kappa$ and $\tilde s\sim\kappa^2$ as well.
\end{proof}

\subsection{Tightness}
Now we prove tightness of the rescaled process $L^{\rm resc,h}_n$ (see (\ref{eq3.1})). Following the ideas in~\cite{CP15b} we prove it using the bounds of Lemma~\ref{LemmaIncrementsBounds} together with the estimates of Lemma~\ref{lemmaExitPoint} and of the fluctuations of sums of i.i.d.\ random variables.

First let us see what Lemma~\ref{LemmaIncrementsBounds} becomes for the rescaled processes. This bounds will be used to show tightness, but also to control the fluctuations beyond the central region of the maximisation problem (see Lemma~\ref{LemmaBoundOnR}). Let us shortly recall the scaling (\ref{eq3.1}) under which $L^{\rm resc,h}_n$ converges in the sense of \emph{finite-dimensional distributions}~\cite{BF07,BP07,SI07} to the Airy$_2$ process, ${\cal A}_2$,
\begin{equation}\label{eq3.1b}
L^{\rm resc,h}_n(u):=\frac{L_{(0,0)\to(\gamma^2 n+ \beta_1 u n^{2/3},n)}- \left((1+\gamma)^2 n + 2 u (1+\gamma)^{5/3}\gamma^{1/3} n^{2/3}-\beta_2 u^2 n^{1/3}\right)}{\beta_2 n^{1/3}},
\end{equation}
with $\beta_1=2(1+\gamma)^{2/3}\gamma^{4/3}$ and $\beta_2=(1+\gamma)^{4/3} \gamma^{-1/3}$.

\begin{lem}\label{lemmaBoundsRescaledProcesses}
Let us define
\begin{equation}\label{eq3.30}
B^{\rho_{\pm}}_{n}(u):=\frac{L^{\rho_{\pm}}(\gamma^2n+\beta_1 u n^{2/3},n)-(L^{\rho_{\pm}}(\gamma^2 n,n)+\frac{1}{1-\rho_\pm}\beta_1 u n^{2/3})}{\beta_2 n^{1/3}}.
\end{equation}
For any fixed constants $M_1,M_2$, consider any two points satisfying $-M_1\leq v\leq u\leq M_2$. Then we have:\\
(a) If $Z^{\rho_+}(\gamma^2n-\beta_1 M_1 n^{2/3},n)\geq 0$, then
\begin{equation}\label{upperbound_modulus}
L^{\rm resc,h}_n(u)-L^{\rm resc,h}_n(v)\leq B^{\rho_+}_n(u)-B^{\rho_+}_n(v) + (u^2-v^2)+2\beta_2 \kappa (u-v)+\Or(n^{-1/3}).
 \end{equation}
(b) If $Z^{\rho_{-}}(\gamma^2n+\beta_1 M_2 n^{2/3},n)\leq 0$, then
\begin{equation}  \label{lowerbound_modulus}
L^{\rm resc,h}_n(u)-L^{\rm resc,h}_n(v)\geq B^{\rho_-}_n(u)-B^{\rho_-}_n(v) +(u^2-v^2)-2\beta_2 \kappa (u-v)+\Or(n^{-1/3}).
 \end{equation}
Here $\Or(n^{-1/3})$ is uniformly for $\kappa$ and $\gamma$ in bounded sets of $(0,\infty)$.
\end{lem}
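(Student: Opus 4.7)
The plan is to reduce the lemma to Lemma~\ref{LemmaIncrementsBounds} applied at the unscaled endpoints $m_1=\gamma^2 n+\beta_1 v n^{2/3}$ and $m_2=\gamma^2 n+\beta_1 u n^{2/3}$ (note $m_1\le m_2$ since $v\le u$), followed by a Taylor expansion of the deterministic centerings which differ between $L^{\rm resc,h}_n$ and $B^{\rho_\pm}_n$.

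First I would upgrade the exit point hypotheses from the boundary values $\gamma^2 n\mp\beta_1 M_i n^{2/3}$ to the interior points $m_1,m_2$. For this I rely on the standard monotonicity of the exit point coordinate in LPP with stationary boundary: $Z^\rho(m,n)$ is non-decreasing in $m$ (a consequence of a crossing/coupling argument between competing boundary paths, as used already in~\cite{CP15b,BCS06}). For case (a), $v\ge -M_1$ gives $m_1\ge \gamma^2 n-\beta_1 M_1 n^{2/3}$, so $Z^{\rho_+}(m_1,n)\ge Z^{\rho_+}(\gamma^2 n-\beta_1 M_1 n^{2/3},n)\ge 0$; for case (b), $u\le M_2$ gives $m_2\le \gamma^2 n+\beta_1 M_2 n^{2/3}$, so $Z^{\rho_-}(m_2,n)\le Z^{\rho_-}(\gamma^2 n+\beta_1 M_2 n^{2/3},n)\le 0$. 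Lemma~\ref{LemmaIncrementsBounds} then yields
\begin{equation*}
L(m_2,n)-L(m_1,n)\ \le\ L^{\rho_+}(m_2,n)-L^{\rho_+}(m_1,n)
\end{equation*}
in case (a), and the reverse inequality with $\rho_-$ in case (b).

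Next I would translate these inequalities to the rescaled processes. Subtracting the deterministic shifts in the definitions (\ref{eq3.1b}) and (\ref{eq3.30}),
\begin{equation*}
L^{\rm resc,h}_n(u)-L^{\rm resc,h}_n(v)-\bigl[B^{\rho_+}_n(u)-B^{\rho_+}_n(v)\bigr]
\end{equation*}
equals $\frac{1}{\beta_2 n^{1/3}}\bigl[(L(m_2,n)-L(m_1,n))-(L^{\rho_+}(m_2,n)-L^{\rho_+}(m_1,n))\bigr]$, which is $\le 0$, plus the purely deterministic quantity
\begin{equation*}
\frac{1}{\beta_2 n^{1/3}}\Bigl[\tfrac{\beta_1}{1-\rho_+}(u-v)n^{2/3}-2(u-v)(1+\gamma)^{5/3}\gamma^{1/3}n^{2/3}+\beta_2(u^2-v^2)n^{1/3}\Bigr].
\end{equation*}
Taylor expanding $\rho_+=\tfrac{1}{1+\gamma}+\kappa n^{-1/3}$ gives $\tfrac{1}{1-\rho_+}=\tfrac{1+\gamma}{\gamma}+\kappa n^{-1/3}\tfrac{(1+\gamma)^2}{\gamma^2}+\Or(n^{-2/3})$, so $\tfrac{\beta_1}{1-\rho_+}=2(1+\gamma)^{5/3}\gamma^{1/3}+2\kappa n^{-1/3}(1+\gamma)^{8/3}\gamma^{-2/3}+\Or(n^{-2/3})$. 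The leading $n^{2/3}$ contributions cancel exactly; the surviving linear-in-$(u-v)$ coefficient is $2\kappa(1+\gamma)^{8/3}\gamma^{-2/3}/\beta_2=2\kappa\beta_2$, using the algebraic identity $\beta_2^2=(1+\gamma)^{8/3}\gamma^{-2/3}$. Together with the $(u^2-v^2)$ term this gives (\ref{upperbound_modulus}), with an $\Or(n^{-1/3})$ error uniform in $\kappa$ and $\gamma$ on compact subsets of $(0,\infty)$. Case (b) is identical up to the sign change $\kappa\mapsto -\kappa$.

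The main point requiring care is purely a bookkeeping one: verifying that the Taylor expansion of $1/(1-\rho_\pm)$ produces precisely the coefficient $2\beta_2\kappa$ via the identity $\beta_2^2=(1+\gamma)^{8/3}\gamma^{-2/3}$, and that the cancellation at order $n^{2/3}$ leaves the $n^{1/3}$ correction as the leading deterministic term. The monotonicity of $Z^{\rho_\pm}$ in the horizontal coordinate, while crucial for transferring the exit-point event, is a well-established consequence of the coupling structure of stationary LPP and requires no new input.
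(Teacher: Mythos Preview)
Your proposal is correct and follows essentially the same approach as the paper: apply Lemma~\ref{LemmaIncrementsBounds} at the unscaled endpoints (using monotonicity of the exit point to transfer the hypothesis from the interval edge), then compute the deterministic discrepancy between the centerings of $L^{\rm resc,h}_n$ and $B^{\rho_\pm}_n$. Your version is in fact more explicit than the paper's, spelling out the monotonicity step and the Taylor expansion that produces the $2\beta_2\kappa$ coefficient via $\beta_2^2=(1+\gamma)^{8/3}\gamma^{-2/3}$; the paper simply writes down the intermediate identity (\ref{eq3.32}) and says ``using the explicit expressions for $\beta_1,\beta_2,\rho_+$''.
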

\begin{proof}
We wrote the conditions on the left-most and right-most point, since by monotonicity they imply the conditions needed to apply Lemma~\ref{LemmaIncrementsBounds} for the full interval $[-M_1,M_2]$.
By Lemma~\ref{LemmaIncrementsBounds} and the definition of the scalings (\ref{eq3.1b}) and (\ref{eq3.30}) we have
\begin{equation}\label{eq3.32}
\begin{aligned}
L^{\rm resc,h}_n(u)-L^{\rm resc,h}_n(v)&\leq B^{\rho_+}_n(u)-B^{\rho_+}_n(v) + (u^2-v^2)\\
&+ \left(\frac{\beta_1}{1-\rho_+}-2(1+\gamma)^{5/3}\gamma^{1/3} \right)\frac{(u-v)}{\beta_2} n^{1/3}.
\end{aligned}
\end{equation}
Using the explicit expressions for $\beta_1$, $\beta_2$, and $\rho_+$ we get (\ref{upperbound_modulus}).

Similarly, we have
\begin{equation}\label{eq3.32b}
\begin{aligned}
L^{\rm resc,h}_n(u)-L^{\rm resc,h}_n(v)&\geq B^{\rho_-}_n(u)-B^{\rho_-}_n(v) + (u^2-v^2)\\
&+ \left(\frac{\beta_1}{1-\rho_-}-2(1+\gamma)^{5/3}\gamma^{1/3} \right)\frac{(u-v)}{\beta_2} n^{1/3},
\end{aligned}
\end{equation}
giving (\ref{lowerbound_modulus}).
\end{proof}

Let us denote the modulus of continuity for the rescaled process $L^{\rm resc,h}_n$ in the interval $[-M,M]$ by $\varpi_n(\delta)$:
\begin{equation}\label{modulus}
\varpi_n(\delta)=\sup_{\begin{smallmatrix}|u|,|v|\leq M\\|u-v|\leq\delta\end{smallmatrix}}|L^{\rm resc,h}_n(u)-L^{\rm resc,h}_n(v)|.
\end{equation}

\begin{proof}[Proof of Theorem~\ref{ThmTightness}]
First of all, notice that the random variable $L^{\rm resc,h}_n(0)$ is tight, see the upper and lower tail estimates in Proposition~\ref{PropBounds}. Thus to show tightness it remains to control the modulus of continuity, namely we need to prove that for any $\e,\tilde\e>0$, there exists a $\delta>0$ and a $n_0$ such that
\begin{equation}\label{eq3.34}
\Pb(\varpi_n(\delta)\geq \e)\leq \tilde\e,
\end{equation}
for all $n\geq n_0$.

For any $\e>0$, for $n$ large enough, by Lemma~\ref{lemmaExitPoint} it holds
\begin{equation}
\Pb(\varpi_n(\delta)\geq \e)\leq 2 C e^{-c\kappa^2} + \Pb(\{\varpi_n(\delta)\geq \e\}\cap \{Z^{\rho_+}_M>0\}\cap\{Z^{\rho_-}_M<0\}),
\end{equation}
where we shorten $Z^{\rho_+}_M=Z^{\rho_+}(\gamma^2n-\beta_1 M n^{2/3},n)$ and $Z^{\rho_-}_M=Z^{\rho_-}(\gamma^2n+\beta_1 M n^{2/3},n)$.
From Lemma~\ref{lemmaBoundsRescaledProcesses}, for $|u|,|v|\leq M$ and $|u-v|\leq \delta$, if we choose $n$ large enough so that the $\Or(n^{-1/3})$ are smaller than $\delta$, then on the set $\{Z^{\rho_+}_M>0\}\cap\{Z^{\rho_-}_M<0\}$ we have
\begin{equation}
|L_n^{\rm resc}(u)-L_n^{\rm resc}(v)|\leq |B^{\rho_+}_n(u)-B^{\rho_+}_n(v)|+|B^{\rho_-}_n(u)-B^{\rho_-}_n(v)|+K(\delta,M,\kappa)
\end{equation}
with $K(\delta,M,\kappa)=(2M+1+2\beta_2\kappa)\delta$. Now choose $\delta$ small enough so that $K(\delta,M,\kappa)<\e/2$. Then, for all $n$ large enough,
\begin{equation}\label{eq3.36}
\begin{aligned}
&\Pb(\{\varpi_n(\delta)\geq \e\}\cap \{Z^{\rho_+}_M>0\}\cap\{Z^{\rho_-}_M<0\}) \\&\leq
\Pb\bigg(\sup_{\begin{smallmatrix}|u|,|v|\leq M\\|u-v|\leq\delta\end{smallmatrix}}|B^{\rho_+}_n(u)-B^{\rho_+}_n(v)|\geq \e/4\bigg)\\
&+\Pb\bigg(\sup_{\begin{smallmatrix}|u|,|v|\leq M\\|u-v|\leq\delta\end{smallmatrix}}|B^{\rho_-}_n(u)-B^{\rho_-}_n(v)|\geq \e/4\bigg).
\end{aligned}
\end{equation}
Dividing the interval $[-M,M]$ into pieces of length $\delta$ and using stationarity of the increments of $B^{\rho_\pm}$ (and $B^{\rho_\pm}(0)=0$) we readily have
\begin{equation}\label{eq3.37}
\Pb\bigg(\sup_{\begin{smallmatrix}|u|,|v|\leq M\\|u-v|\leq\delta\end{smallmatrix}}|B^{\rho_\pm}_n(u)-B^{\rho_\pm}_n(v)|\geq \e/4\bigg)\\
\leq\frac{2M}{\delta}\Pb\Big(\sup_{0\leq u\leq \delta}|B^{\rho_\pm}_n(u)|\geq \e/12\Big),
\end{equation}
compare e.g.\ with sentence around (5.60) in~\cite{Jo03b}. A short computation and the use of Donsker's invariance principle theorem imply that the processes $u\mapsto B^{\rho_\pm}_n(u)$ converges weakly in ${\cal C}([-M,M])$ to $u\mapsto \sigma {\cal B}(u)$, where ${\cal B}$ is a standard Brownian motion and $\sigma=\sigma(\gamma)=\sqrt{2\gamma/(1+\gamma)}$. This implies that for $n$ large enough,
\begin{equation}
{\rm r.h.s.~of~}(\ref{eq3.37})\leq \frac{8M}{\delta} \Pb\Big(\sup_{0\leq u\leq \delta}|{\cal B}(u)|\geq \e/12\Big)\leq \frac{8 M}{\delta} \exp\left(-\frac{\e^2}{288\,\delta\sigma^2}\right),
\end{equation}
where we use the bound $\Pb\left(\sup_{t\in[0,T]}|{\cal B}(t)|>\lambda\right)\leq e^{-\lambda^2/2T}$.

To resume, we have obtained that for any $\e>0$ and $n$ large enough, it holds for $\tilde\kappa=\kappa - M \gamma^{1/3}(1+\gamma)^{-4/3}>0$,
\begin{equation}
\Pb(\varpi_n(\delta)\geq \e)\leq 2 C e^{-c\tilde \kappa^2} +  \frac{8 M}{\delta}\exp\left(-\frac{\e^2}{288\,\delta\sigma^2}\right).
\end{equation}
For any fixed $\tilde\e>0$, we choose $\kappa$ large enough such that $2 C e^{-c\tilde \kappa^2}\leq\tilde\e/2$ and then $\delta$ small enough such that $\frac{8 M}{\delta}\exp(-\e^2/(288\,\delta\sigma^2))\leq \tilde\e/2$ for any $n$ large enough. This proves (\ref{eq3.34}).
\end{proof}

\section{Proof of Theorem~\ref{ThmMainLPP}}\label{sect:mainThmproof}
In this section we prove the main theorem of LPP. The proof consists in showing that the LPP converges to a variational process. One essentially shows that (a) the LPP from $\LL$ to $E_N(w)$ is with high probability the same as the LPP from a subset of $\LL$ of size $\Or(M N^{2/3})$, and (b) that in that region the LPP converges to the variational process of the theorem restricted to $|u|\leq M$. The most important novelty of our proof, with respect to the works in~\cite{CLW16,CFS16}, is part (a). In~\cite{CLW16} they first needed to prove a Brownian-Gibbs property for an associated non-intersecting line ensemble. In~\cite{CFS16} one bounded a Fredholm determinant of a half-line problem corresponding to density $\rho=1/2$ for TASEP (and this approach can not be extended to the generic $\rho\in (0,1)$ case).

\begin{proof}[Proof of Theorem~\ref{ThmMainLPP}]
Let us recall that we study the LPP from $\LL$ and $\Lflat$ to $E_N(w)$. From the law of large numbers of the point-to-point LPP, see Proposition~\ref{PropBounds}(a), by optimizing over the positions on $\Lflat$ we obtain that the maximizer starts around $0$ (in a $\Or(N^{2/3})$ neighborhood). Remember the definition of the points $A^{\rm flat}(v)$ and $A(v)$ given in (\ref{eqDefSflat}) and (\ref{eqDefS}).
For a fixed $M>0$, define the following LPP problems:
\begin{equation}
L_{M}=\max_{|v|\leq M} L_{A(v)\to E_N(w)}\quad\textrm{and}\quad L_{M^c}=\max_{|v|> M} L_{A(v)\to E_N(w)}.
\end{equation}
According to (\ref{eqMainThm}) we need to determine the $N\to\infty$ limit of
\begin{equation}\label{eq2.15}
\Pb\left(\max\{L_{M},L_{M^c}\}\leq S(s)\right),\quad S(s)=N/\chi + s \chi^{-2/3} N^{1/3}.
\end{equation}
For large $M$ (as we will show) one expects that $L_{M}>L_{M^c}$ with high probability. Thus we define the events
\begin{equation}
R_{M}=\{L_{M^c}> S(s)\},\quad G_{M}=\{L_{M}\leq S(s)\}.
\end{equation}
With these definitions we have
\begin{equation}\label{eq2.17}
(\ref{eq2.15})=\Pb\left(R_{M}^c\cap G_{M}\right)= \Pb\left(G_{M}\right)-\Pb\left(R_{M}\cap G_{M}\right).
\end{equation}
In Lemma~\ref{LemmaBoundOnR} we show that, $\Pb(R_{M}\cap G_{M})\leq C e^{-c M^{2}}+Q(M)$ uniformly in $N$, where the function $Q$ is the one in Assumption~B. This implies that
\begin{equation}
\lim_{M\to\infty}\lim_{N\to\infty} \Pb\left(R_{M}\cap G_{M}\right) = 0.
\end{equation}
Thus it remains to determine $\lim_{M\to\infty}\lim_{N\to\infty} \Pb(G_M)$.

The limit is obtained by first considering the last passage percolation problem from points on the horizontal line crossing $(0,0)$, see Figure~\ref{FigLPPzoom},
\begin{figure}
\begin{center}
\psfrag{A0}[cc]{$(0,0)$}
\psfrag{Av}[rc]{$A(v)$}
\psfrag{Avf}[cc]{$A^{\rm flat}(v)$}
\psfrag{ApMf}[cc]{$A^{\rm flat}(M)$}
\psfrag{AmMf}[cc]{$A^{\rm flat}(-M)$}
\psfrag{Avt}[cc]{$\widetilde A(v)$}
\psfrag{Apv}[rc]{$\widetilde A^+(v)$}
\psfrag{Amv}[lc]{$\widetilde A^-(v)$}
\psfrag{L}[lc]{$\LL$}
\psfrag{Lf}[lc]{$\Lflat$}
\includegraphics[height=4cm]{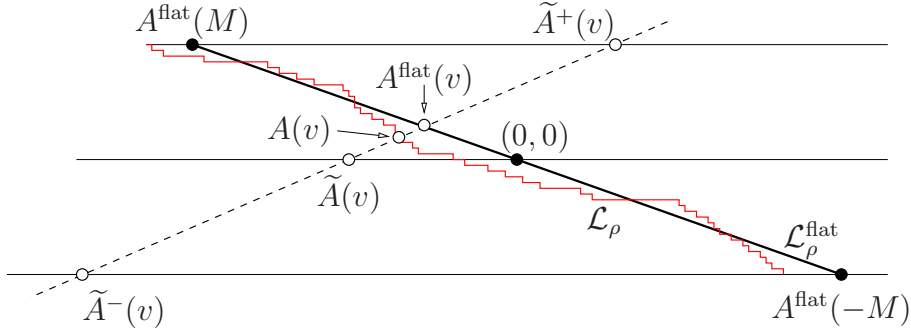}
\caption{Zoom of the LPP around the line relevant region of $\LL$ (red line) where the maximizers starts. For a given $v$, $\widetilde A^\pm(v)$, $\widetilde A(v)$, and $A(v)$ are on the same line, the line parallel to $\overline{(0,0),E_N(w)}$.}
\label{FigLPPzoom}
\end{center}
\end{figure}
for which the finite-dimensional distribution is known, and then using the functional slow-decorrelation result of Theorem~\ref{ThmFctSlowDec} we transport the fluctuations to the line $\LL$. We define
\begin{equation}\label{eqAtilde}
\widetilde A(v)=(-\alpha_1 v N^{2/3},0),\quad \alpha_1 = 2\frac{(1-\rho)^{2/3}}{\rho^{4/3}},
\end{equation}
and
\begin{equation}
\widetilde G_{M}=\Big\{\max_{|v|\leq M} L_{\widetilde A(v)\to E_N(w)}- \alpha_2 v N^{2/3}\leq S(s)\Big\},\quad \alpha_2=\frac{2}{\rho^{4/3}(1-\rho)^{1/3}}.
\end{equation}
In~\cite{BP07} it is shown\footnote{ The convergence of finite dimensional distributions can be also obtained from the finite-dimensional distributions along other lines using slow-decorrelation~\cite{CFP10b,Fer07}. For instance it can be obtained starting from the analogue result for the joint distributions of TASEP particle positions~\cite{BF07}; see~\cite{BFP09} for an application of this technique.} the \emph{convergence of finite dimensional distributions} of the rescaled process:
\begin{equation}\label{eq4.10}
\widetilde L^{\rm resc}_N(v):=\frac{L_{\widetilde A(v)\to E_N(w)}-(N/\chi +\alpha_2 v N^{2/3})}{\chi^{-2/3} N^{1/3}} \to {\cal A}_2(v)-(v-w)^2
\end{equation}
as $N\to\infty$, with ${\cal A}_2$ an Airy$_2$ process. In Theorem~\ref{ThmTightness} we show that as a process $v\mapsto \widetilde L^{\rm resc}_N(v)$ is \emph{tight} in the set of continuous functions with supremum norm, ${\cal C}([-M,M])$, extending the sense of convergence to the weak*-convergence.

The rescaled process we want to study is
\begin{equation}\label{defRescProcess}
L_N^{\rm resc}(v):=\frac{L_{A(v)\to E_N(w)}-N/\chi}{\chi^{-2/3} N^{1/3}}.
\end{equation}
In terms of the rescaled process, we indeed have
\begin{equation}
\Pb(G_M)=\Pb\Big(\max_{|v|\leq M} L_N^{\rm resc}(v)\leq s\Big).
\end{equation}
For any realization of initial condition, the random line $\LL$ passes in a neighborhood of the origin. Restricted to a $M N^{2/3}$-neighborood of the origin, by Assumption A we have that the points on $\LL$ are given by
\begin{equation}
A(v)=A^{\rm flat}(v)+\lambda(v) \mathbf{e}_\rho,\textrm{ with }\lambda(v)\simeq \chi^{-2/3} N^{1/3} {\cal R}(v)
\end{equation}
as $N\to\infty$. Define the set
\begin{equation}
F_\e=\Big\{\max_{|v|\leq M}|L_N^{\rm resc}(v)-\widetilde L_N^{\rm resc}(v)|\leq \e\Big\}.
\end{equation}
By Theorem~\ref{ThmFctSlowDec}, for any $\e>0$, $\lim_{N\to\infty} \Pb(F_\e)=1$. Thus, for any $\e>0$,
\begin{equation}
\lim_{M\to\infty}\lim_{N\to\infty} \Pb(G_M)=\lim_{M\to\infty}\lim_{N\to\infty} \Pb(G_M\cap F_\e).
\end{equation}
The centerings in $L_N^{\rm resc}(v)$ and $\widetilde L_N^{\rm resc}(v)$ are the law of large number approximation from $A^{\rm flat}(v)$ and $\widetilde A(v)$ respectively. Define $\mu(m,n)=(\sqrt{m}+\sqrt{n})^2$ (see Proposition~\ref{PropBounds}), then we define
\begin{equation}
\Delta_N(v):=\frac{\mu(E_N(w)-A(v))-\mu(E_N(w)-A^{\rm flat}(v))}{\chi^{-2/3} N^{1/3}}.
\end{equation}
Then
\begin{equation}
\Pb(G_M\cap F_\e)\leq \Pb\Big(\Big\{\max_{|v|\leq M}[\widetilde L_N^{\rm resc}(v)+\Delta_N(v)]\leq s+\e\Big\}\cap F_\e\Big).
\end{equation}
A lower bound on $\Pb(G_M\cap F_\e)$ is obtained with $-\e$ instead of $\e$.

By Assumption A, $\lim_{N\to\infty} \Delta_N(v)={\cal R}(v)=\sqrt{2}\sigma{\cal B}(v)$ weakly. Together with the weak convergence of (\ref{eqAtilde}), we obtain
 \begin{equation}
\begin{aligned}
\lim_{M\to\infty}\lim_{N\to\infty}\Pb(G_M\cap F_\e)&\leq \lim_{M\to\infty}\Pb\Big(\max_{|v|\leq M}[{\cal A}_2(v)-(v-w)^2+{\cal R}(v)]\leq s+\e\Big)\\
&=\Pb\Big(\max_{v\in\R}[{\cal A}_2(v)-(v-w)^2+{\cal R}(v)]\leq s+\e\Big).
\end{aligned}
\end{equation}
The last inequality holds since both the maximum of the Airy$_2$ minus a parabola and of ${\cal R}(v)$ minus a parabola are tight. For the special case of flat initial condition, i.e., when ${\cal R}=0$,
\begin{equation}
\Pb\big(\max_{v\in\R}[{\cal A}_2(v)-(v-w)^2]\leq s\big)\stackrel{(d)}{=}\Pb\big(\max_{v\in\R}[{\cal A}_2(v)-v^2]\leq s\big)=F_{\rm GOE}(2^{2/3} s),
\end{equation}
where we used the fact that the Airy$_2$ process is stationary, and the last equality was proven in~\cite{Jo03b}. This ends the proof of Theorem~\ref{ThmMainLPP}.
\end{proof}

\begin{thm}[Functional slow-decorrelation]\label{ThmFctSlowDec}
Consider any down-right path $\cal L$ passing a.s.\ at a finite-distance from the origin. Let $\widetilde A(v)$ be as in (\ref{eqAtilde}) and let $B(v)$ be the closest point on $\cal L$ to the line from $\widetilde A(v)$ to $E_N(w)$. Consider the rescaled processes (defined for any $v\in\R$ through linear interpolation)
\begin{equation}\label{defRescProcess}
L_N^{\rm resc,B}(v):=\frac{L_{B(v)\to E_N(w)}-\mu(E_N(w)-B(v))}{\chi^{-2/3} N^{1/3}},\quad \mu(m,n)=(\sqrt{m}+\sqrt{n})^2
\end{equation}
as well as $\widetilde L^{\rm resc}_N$ given in (\ref{eq4.10}). Then $L^{\rm resc,B}_N-\widetilde L^{\rm resc}_N$ converges in probability to $0$ in ${\cal C}([-M,M])$ as $N\to\infty$. More precisely, for any $\e,\tilde\e>0$ there is a $N_0$ such that for all $N\geq N_0$,
\begin{equation}
\Pb\left(\max_{|v|\leq M} |L^{\rm resc,B}_N(v)-\widetilde L^{\rm resc}_N(v)|\geq \e\right)\leq \tilde \e.
\end{equation}
\end{thm}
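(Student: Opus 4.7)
The plan is to lift the pointwise slow-decorrelation result (Theorem~\ref{ThmSlowDecOnePt}) to uniform convergence on $[-M,M]$ by combining it with tightness of both processes and a standard finite-grid argument. Concretely, for any fixed $v$, Theorem~\ref{ThmSlowDecOnePt} already gives
\begin{equation*}
L_N^{\rm resc,B}(v) - \widetilde L_N^{\rm resc}(v) \xrightarrow{\Pb} 0 \quad (N\to\infty).
\end{equation*}
If we can show that both families $\{L_N^{\rm resc,B}\}_N$ and $\{\widetilde L_N^{\rm resc}\}_N$ are tight in $\mathcal{C}([-M,M])$, then for any $\e,\tilde\e>0$ we pick a grid $-M=v_0<v_1<\dots<v_K=M$ with spacing $<\delta$, where $\delta$ is chosen small enough (uniformly in $N\geq N_0$) so that the modulus of continuity of each process on scale $\delta$ is $<\e/3$ with probability at least $1-\tilde\e/3$. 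At each of the $K$ grid points we apply Theorem~\ref{ThmSlowDecOnePt} and take a union bound to get $\max_i |L_N^{\rm resc,B}(v_i)-\widetilde L_N^{\rm resc}(v_i)|<\e/3$ with probability at least $1-\tilde\e/3$ for $N$ large. A triangle inequality argument between nearest grid points then yields uniform closeness on $[-M,M]$ with probability $\geq 1-\tilde\e$, which is the desired conclusion.

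Tightness of $\widetilde L_N^{\rm resc}$ is immediate from Theorem~\ref{ThmTightness}, since after relabeling $\widetilde A(v)=(-\alpha_1 v N^{2/3},0)$ fits exactly into the setting of the horizontal-line process $L_n^{\rm resc,h}$ used there (with $\gamma$ determined by the characteristic direction $\mathbf{e}_\rho$). Therefore the modulus of continuity of $\widetilde L_N^{\rm resc}$ on $[-M,M]$ is controlled via Lemma~\ref{lemmaBoundsRescaledProcesses} and the exit-point bounds of Lemma~\ref{lemmaExitPoint}.

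The hard part is tightness of $L_N^{\rm resc,B}$, because $B(v)$ lies on an essentially arbitrary down-right path $\mathcal{L}$ through the origin rather than on a horizontal line. The strategy is to adapt the proof of Theorem~\ref{ThmTightness}: as $v$ varies by $\delta$, the point $B(v)$ shifts along $\mathcal{L}$ by $O(\delta N^{2/3})$ (since the parallel line from $\widetilde A(v)$ to $E_N(w)$ translates by $O(\delta N^{2/3})$ and $\mathcal{L}$ is a.s.~at finite distance from the origin). One may then sandwich the increments $L_{B(v')\to E_N(w)} - L_{B(v)\to E_N(w)}$ between the corresponding increments of stationary LPPs with densities $\rho_\pm=\rho_0\pm\kappa N^{-1/3}$ using the natural generalization of Lemma~\ref{LemmaIncrementsBounds}, where now the exit point is measured from a diagonal (the direction orthogonal to $\mathbf{e}_\rho$) rather than from a corner. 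The resulting fluctuations, after subtracting the deterministic centering $\mu(E_N(w)-B(v))$, are governed by Brownian-type increments of scale $O(\sqrt{\delta})$ plus a $\kappa$-dependent drift, exactly as in the proof of Theorem~\ref{ThmTightness}. This yields the required estimate on the modulus of continuity, uniformly in $N$.

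The main obstacle is the second step above: generalizing the horizontal-line tightness to starting points on an arbitrary down-right path. Once this is in place, the rest is the standard ``finite-dimensional convergence plus uniform tightness implies uniform convergence'' argument, together with a clean union bound. Since the pointwise slow-decorrelation limit is the \emph{deterministic} constant $0$, convergence in probability in $\mathcal{C}([-M,M])$ (rather than only in distribution) follows automatically, giving the quantitative statement in the theorem.
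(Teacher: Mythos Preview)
Your grid-plus-tightness framework is correct in spirit and matches the first half of the paper's argument: one-point slow-decorrelation at finitely many mesh points, combined with tightness, upgrades to uniform convergence. The issue is how you propose to get tightness of $L_N^{\rm resc,B}$.

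You identify this yourself as ``the main obstacle'', and it is. The generalization of Lemma~\ref{LemmaIncrementsBounds} you sketch---comparing increments of the boundary-free LPP along an arbitrary down-right path $\mathcal{L}$ to stationary increments, with an exit point ``measured from a diagonal''---is not something that follows directly from the Cator--Pimentel lemma. That lemma uses the corner geometry: the stationary sources live on the coordinate axes, and the exit-point dichotomy (on the $x$-axis vs.\ $y$-axis) is what drives the inequalities. To run the same argument with starting points on $\mathcal{L}$ you would need to set up stationary boundary data \emph{on} $\mathcal{L}$ and redo the exit-point analysis there. This is plausible (the Bal\'azs--Cator--Sepp\"al\"ainen Burke property does extend to down-right paths), but it is genuinely new work and you have not carried it out.

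The paper sidesteps this entirely. Instead of proving tightness of $L_N^{\rm resc,B}$ directly, it introduces two auxiliary \emph{horizontal} lines $\mathcal{L}^\pm$ at heights $\pm 2\rho\chi^{-1/3}MN^{2/3}$, so that for $|v|\leq M$ every $B(v)$ lies between them. Along the characteristic line through $\widetilde A(v)$ one has points $\widetilde A^-(v)\preceq B(v)\preceq \widetilde A^+(v)$, and subadditivity of LPP gives
\[
L_{\widetilde A^-(v)\to E_N(w)} \geq L_{\widetilde A^-(v)\to B(v)}+L_{B(v)\to E_N(w)},\qquad
L_{B(v)\to E_N(w)} \geq L_{B(v)\to \widetilde A^+(v)}+L_{\widetilde A^+(v)\to E_N(w)}.
\]
The short point-to-point pieces $L_{\widetilde A^-(v)\to B(v)}$ and $L_{B(v)\to \widetilde A^+(v)}$ have length $O(N^{2/3})$ along the characteristic, hence fluctuations $O(N^{2/9})=o(N^{1/3})$; the lower-tail bound of Proposition~\ref{PropBounds}(c) controls them uniformly. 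After rescaling one obtains a two-sided sandwich $\widetilde L_N^{\rm resc,+}(v)+o_\Pb(1)\leq L_N^{\rm resc,B}(v)\leq \widetilde L_N^{\rm resc,-}(v)+o_\Pb(1)$. Now \emph{both} $\widetilde L_N^{\rm resc,\pm}$ are horizontal-line processes, so Theorem~\ref{ThmTightness} applies to them directly, and your grid argument (tightness $+$ one-point slow-decorrelation) shows each is uniformly close to $\widetilde L_N^{\rm resc}$. Squeezing finishes the proof.

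In short: your reduction to tightness is right, but rather than proving tightness of $L_N^{\rm resc,B}$ from scratch, sandwich it between two horizontal-line processes via subadditivity. That removes the obstacle you flagged and uses only the ingredients already available in the paper.
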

\begin{proof}
The proof is almost identical to the one of Theorem~2.10 in~\cite{CFS16}, see also Theorem~2.15 of~\cite{CLW16} (which is two pages long) and therefore we do not repeat it. Let us just mention the strategy and on the way the inputs which are needed. Using Theorem~\ref{ThmTightness} one knows that the processes along the horizontal lines $\cal L^\pm$ crossing $A(\pm M)$ are tight. One defines the rescaled processes $\widetilde L^{\rm resc,\pm}_N(v)$ to be the analogues of $\widetilde L^{\rm resc}_N(v)$ but with starting points on $\cal L^\pm$, which we call $\widetilde A^\pm(v)$, see Figure~\ref{FigLPPzoom}. Using \emph{tightness} of $\widetilde L^{\rm resc}_N$ (see Theorem~\ref{ThmTightness}) and \emph{one-point slow-decorrelation} (see Theorem~\ref{ThmSlowDecOnePt}) one bounds $\max_{|v|\leq M}|\widetilde L^{\rm resc,\pm}_N(v)-\widetilde L^{\rm resc}_N(v)|$. Finally one needs to control for example the increments of $\widetilde L^{\rm resc,+}_N(v)-L^{\rm resc}_N(v)$. For this one employs use of the subadditivity property of LPP, $L_{\widetilde A^+(v)\to E_N(w)}\geq L_{\widetilde A^+(v)\to A(v)}+L_{A(v)\to E_N(w)}$, and \emph{the bound on the left tail} of $L_{\widetilde A^+(v)\to A(v)}$ provided in Proposition~\ref{PropBounds}.
\end{proof}

A direct consequence of tightness of $\widetilde L_N^{\rm resc}$ and the functional slow-decorrelation result (Theorem~\ref{ThmFctSlowDec}) is the following.
\begin{cor}\label{CorTightness}
Fix any $M\in (0,\infty)$. Then the rescaled LPP process from $\LL$ to $E_N(w)$, $v\mapsto L_N^{\rm resc}(v)$ defined in (\ref{defRescProcess}), is tight in the space of continuous functions on $[-M,M]$, ${\cal C}([-M,M])$. It converges weakly to an Airy$_2$ process $u\mapsto {\cal A}_2(u)$.
\end{cor}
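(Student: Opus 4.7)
The plan is to derive Corollary~\ref{CorTightness} by a Slutsky-type combination of the ingredients already established: tightness of the auxiliary horizontal-line process from Theorem~\ref{ThmTightness}, the finite-dimensional convergence stated in (\ref{eq4.10}), and the functional slow-decorrelation result Theorem~\ref{ThmFctSlowDec}. No new probabilistic estimate is needed; the argument is essentially a soft convergence-in-distribution exercise on ${\cal C}([-M,M])$.

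First I would combine Theorem~\ref{ThmTightness} with the finite-dimensional convergence in (\ref{eq4.10}). Prokhorov's theorem upgrades tightness plus f.d.d.\ convergence to full weak convergence on ${\cal C}([-M,M])$, so $v\mapsto \widetilde L_N^{\rm resc}(v)$ converges weakly to the continuous process $v\mapsto {\cal A}_2(v)-(v-w)^2$.

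Second, I would apply Theorem~\ref{ThmFctSlowDec} to the down-right path ${\cal L}=\LL$ (which, by Assumption~A and the definition of $A(v)$, lies a.s.\ at finite distance from the origin after rescaling). The theorem yields $\sup_{|v|\leq M}|L_N^{\rm resc,B}(v)-\widetilde L_N^{\rm resc}(v)|\to 0$ in probability. Combining with the previous step via Slutsky's lemma in ${\cal C}([-M,M])$, the process $L_N^{\rm resc,B}$ is tight and converges weakly to the same limit ${\cal A}_2(v)-(v-w)^2$.

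Third, I would pass from $L_N^{\rm resc,B}$ to the process $L_N^{\rm resc}(v)=[L_{A(v)\to E_N(w)}-N/\chi]/(\chi^{-2/3}N^{1/3})$ of the corollary by writing
\begin{equation*}
L_N^{\rm resc}(v)-L_N^{\rm resc,B}(v)=\frac{\mu(E_N(w)-B(v))-N/\chi}{\chi^{-2/3}N^{1/3}},
\end{equation*}
which is a deterministic-plus-random shift (depending on $B(v)$ via $\LL$). Expanding $\mu(m,n)=(\sqrt m+\sqrt n)^2$ to subleading order around $(E_N(w)-A^{\rm flat}(v))$ as done in the proof of Theorem~\ref{ThmMainLPP}, together with Assumption~A identifying the random deviation $\lambda(v)/(\chi^{-2/3}N^{1/3})$ with ${\cal R}(v)$, shows that this shift converges, weakly in ${\cal C}([-M,M])$, to a continuous process of the form $-(v-w)^2+{\cal R}(v)$ (up to an irrelevant constant). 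A further Slutsky step then yields tightness of $L_N^{\rm resc}$; stationarity of the Airy$_2$ process absorbs the $w$-shift and identifies the weak limit as an Airy$_2$ process, which is the claim.

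The one point requiring care is the bookkeeping of the several centerings ($N/\chi$ versus $\mu(E_N(w)-A(v))$ versus $\mu(E_N(w)-B(v))$); all the probabilistic difficulty is already contained in Theorems~\ref{ThmTightness} and~\ref{ThmFctSlowDec}, so I expect no genuine obstacle beyond checking that the Taylor expansion of $\mu$ combines with Assumption~A to give the continuous deterministic-plus-${\cal R}$ limit uniformly on $[-M,M]$.
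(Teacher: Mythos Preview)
Your approach is exactly the paper's: the corollary is stated there as ``a direct consequence of tightness of $\widetilde L_N^{\rm resc}$ and the functional slow-decorrelation result (Theorem~\ref{ThmFctSlowDec})'', and you have simply spelled out the Slutsky-type argument linking these two inputs, together with the bookkeeping of the different centerings. For the tightness claim this is complete and matches the paper.

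One small gap in your final step: invoking stationarity of ${\cal A}_2$ to ``absorb the $w$-shift and identify the weak limit as an Airy$_2$ process'' does not work as written. Stationarity gives ${\cal A}_2(\cdot+w)\stackrel{d}{=}{\cal A}_2(\cdot)$, but it cannot remove the parabola: the weak limit of $L_N^{\rm resc,B}$ (and hence of $L_N^{\rm resc}$ after your centering shift) on $[-M,M]$ is $v\mapsto {\cal A}_2(v)-(v-w)^2$ plus the ${\cal R}$-contribution, not a bare Airy$_2$ process. This is arguably an imprecision already in the corollary's phrasing rather than in your argument; the tightness assertion, which is the part actually used downstream, is unaffected.
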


\begin{lem}\label{LemmaBoundOnR}
Define $G_M=\{\max_{|v|\leq M} L_{A(v)\to E_N(w)}\leq a_0 N +a_1 s N^{1/3}\}$ and \mbox{$R_M=\{\max_{|v|> M} L_{A(v)\to E_N(w)}>a_0 N +a_1 s N^{1/3}\}$}, with $a_0=1/\chi$ and $a_1=1/\chi^{2/3}$. Under Assumption~B, there exists a finite $M_0$ such that for any given $M\geq M_0$,
\begin{equation}
\Pb\left(G_M\cap R_M\right) \leq C e^{-c M^2}+Q(M)
\end{equation}
for some constants $C,c>0$ which are uniform in $N$. In particular, for flat initial conditions (where $Q=0$),
\begin{equation}\label{eq4.17}
\Pb(\textrm{the LPP maximizer starts from }A^{\rm flat}(v)\textrm{ with }|v|\leq M)\geq 1-2C e^{-c M^2}.
\end{equation}
\end{lem}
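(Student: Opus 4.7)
The plan is first to use Assumption~B to reduce to a deterministic starting line (costing $Q(M)$ in probability), and then to bound the fluctuations of the LPP from far-away starting points by those of a stationary LPP via Lemma~\ref{lemmaBoundsRescaledProcesses}. Stationary increments are diffusive by Donsker, whereas the parabolic $(v-w)^2\chi^{-2/3}N^{1/3}$ decay of the LLN of $L_{A(v)\to E_N(w)}$ dominates them for $|v|\gg M$, producing the Gaussian bound $e^{-cM^2}$.

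On the Assumption~B event (of probability at least $1-Q(M)$), $\lambda(v)\ge -\delta v^2 N^{1/3}$ for all $|v|\ge M$. Monotonicity of LPP in the starting point upper-bounds $L_{A(v)\to E_N(w)}$ by $L_{\bar A(v)\to E_N(w)}$ with the deterministic $\bar A(v)=A^{\rm flat}(v)-\delta v^2 N^{1/3}\mathbf{e}_\rho$; for $\delta$ small, the shift contributes only a fraction $\Or(\delta v^2)\chi^{-2/3}N^{1/3}$ to the LLN, negligible compared to the parabolic centering. Next, apply Lemma~\ref{lemmaBoundsRescaledProcesses} with $\kappa$ of order $M$, so that by Corollary~\ref{corExitPoint} the exit-point conditions at $v=\pm M$ hold with probability at least $1-Ce^{-cM^2}$. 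Translated to the tilted-line rescaling (where the horizontal $-\beta_2 u^2 n^{1/3}$ subtraction of $L_n^{\rm resc,h}$ is \emph{not} present in $L_N^{\rm resc}$, so the implicit $v^2$ in the LLN expansion shows up as the parabola), we obtain for any $|v|>M$
\begin{equation*}
L_N^{\rm resc}(v)-L_N^{\rm resc}(\sgn(v)M)\ \le\ B_N^{\rho_\pm}(v)-B_N^{\rho_\pm}(\sgn(v)M)-\bigl[(v-w)^2-(M-w)^2\bigr]+2\beta_2\kappa(|v|-M)+o(1),
\end{equation*}
where $B_N^{\rho_\pm}$ are the stationary-LPP rescaled increment processes of (\ref{eq3.30}).

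On $G_M\cap R_M$ there must exist $|v|>M$ where the above left-hand side is positive, forcing the stationary increment to satisfy $B_N^{\rho_\pm}(v)-B_N^{\rho_\pm}(\sgn(v)M)\ge (v-M)(v+M-2w)-2\beta_2\kappa(|v|-M)-o(1)$. For $\kappa$ a sufficiently small multiple of $M$, this right-hand side is bounded below by $cM(|v|-M)$ for $|v|$ close to $M$ and grows at least like $cv^2$ for $|v|\gg M$. Donsker's invariance principle gives $B_N^{\rho_\pm}\Rightarrow\sigma\mathcal{B}$ weakly on compacts, so applying $\Pb(\sup_{0\le t\le T}|\mathcal{B}(t)|\ge a)\le 2\exp(-a^2/(2T))$ with $T=|v|-M$ and $a\gtrsim M(|v|-M)$ yields a single-scale bound $\exp(-cM^2(|v|-M))$. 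A dyadic decomposition over $|v|-M\in[2^k,2^{k+1}]$ together with the short-range contribution $|v|-M\in[0,1]$ sums to $Ce^{-cM^2}$, giving $\Pb(G_M\cap R_M)\le Ce^{-cM^2}+Q(M)$; the flat case (where $Q\equiv 0$) yields (\ref{eq4.17}).

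The main obstacle is the bookkeeping of step two: extending Lemma~\ref{lemmaBoundsRescaledProcesses} to unbounded $|v|>M$ (which follows by the same proof, since only the extreme-point exit-point events are used) and ensuring that the linear drift $2\beta_2\kappa(|v|-M)$ from the stationary comparison does not destroy the parabolic dominance. The delicate choice $\kappa\sim M$ balances the exit-point probability $e^{-cM^2}$ against this drift. The short-range scale $|v|-M\in[0,1]$ is the one that dictates the final Gaussian rate: Brownian fluctuations of size $M$ over intervals of length $1$ cost $e^{-cM^2}$, which precisely matches the exit-point cost and hence the claimed bound.
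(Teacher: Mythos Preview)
Your overall strategy matches the paper's: reduce via Assumption~B to a deterministic line $\bar A(v)=A^{\rm flat}(v)-\delta v^2 N^{1/3}\mathbf{e}_\rho$, then exploit the parabolic centering against stationary-controlled increments on blocks. However, there are three genuine gaps in the execution.

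\textbf{(1) Lemma~\ref{lemmaBoundsRescaledProcesses} applies to the horizontal-line process, not to $L_N^{\rm resc}$.} The comparison inequalities (\ref{upperbound}), (\ref{lowerbound}) and hence Lemma~\ref{lemmaBoundsRescaledProcesses} are statements about $L_{(0,0)\to(\cdot,n)}$ with both endpoints on a common horizontal line. Your $L_N^{\rm resc}(v)-L_N^{\rm resc}(\sgn(v)M)$ is an increment along the \emph{tilted} line $\bar A(\cdot)$, whose points do not share a common coordinate. You cannot ``translate to the tilted-line rescaling'' for free. The paper handles this by introducing, for each block $[kM,(k+1)M]$, auxiliary points $\widehat D(v)$ on the horizontal line through $\widehat A(kM)$, and splitting
\[
L_{\widehat A(v)\to E_N(w)}-L_{\widehat A(kM)\to E_N(w)}
= \bigl(L_{\widehat A(v)\to E_N(w)}-L_{\widehat D(v)\to E_N(w)}\bigr)
+\bigl(L_{\widehat D(v)\to E_N(w)}-L_{\widehat A(kM)\to E_N(w)}\bigr).
\]
The second piece is a horizontal-line increment to which Lemma~\ref{lemmaBoundsRescaledProcesses} legitimately applies; the first is controlled by subadditivity $L_{\widehat D(v)\to E_N(w)}\ge L_{\widehat D(v)\to \widehat A(v)}+L_{\widehat A(v)\to E_N(w)}$ together with the lower-tail bound of Proposition~\ref{PropBounds} on $L_{\widehat D(v)\to \widehat A(v)}$. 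Your sketch omits this transfer entirely.

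\textbf{(2) Donsker is not enough.} You invoke $B_N^{\rho_\pm}\Rightarrow\sigma{\cal B}$ weakly on compacts and then apply a Brownian tail bound scale by scale. Weak convergence on compacts gives no uniform-in-$N$ tail estimate, and certainly not one valid across an unbounded dyadic range $|v|-M\in[2^k,2^{k+1}]$ with $k$ growing with $N$. The paper instead uses the explicit structure of $B_n^{\rho_+}$ as a centered random walk of ${\rm Exp}(1-\rho_+)$ increments and applies Doob's maximal inequality with $f(x)=e^{\lambda x}$ to get the finite-$n$ bound (\ref{eq4.44})--(\ref{eq4.46}). This is what actually produces the uniform $e^{-cM^2}$ rate. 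Relatedly, the paper takes $\kappa=\e_0 kM$ \emph{depending on the block index $k$}, which is what makes the exit-point probability (Lemma~\ref{lemmaExitPoint}) and the linear drift $2\beta_2\kappa u$ simultaneously summable; a single $\kappa\sim M$ makes the drift $2\beta_2\kappa(|v|-M)$ exactly comparable to the parabola near $|v|=M$, so the margin you claim (``bounded below by $cM(|v|-M)$'') is not automatic.

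\textbf{(3) Degenerate geometry at macroscopic $|v|$.} The range of admissible $v$ extends to $|v|=\Or(N^{1/3})$, where the aspect ratio $\eta$ of the point-to-point problem leaves any fixed compact of $(0,\infty)$ and the constants in Proposition~\ref{PropBounds} are no longer uniform. The paper cuts this off by restricting the block decomposition to $k\le N^{\nu/3}$, handling $N^{\nu/3}\le|v|\le\Or(N^{1/3})$ by individual point-to-point upper-tail bounds, and absorbing the extreme corners via two fixed points $C_\pm$ with LLN gap of order $N$. Your dyadic sum implicitly runs over all scales and does not address this.

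In short, the skeleton is right but the proof as written does not go through: you must (i) project to a horizontal line before invoking Lemma~\ref{lemmaBoundsRescaledProcesses}, (ii) replace the Donsker step by a direct exponential martingale bound with a $k$-dependent $\kappa$, and (iii) treat the macroscopically distant starting points separately.
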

\begin{proof}
For $s\leq -\tfrac14 M^2$, we have
\begin{equation}
\begin{aligned}
\Pb(G_M\cap R_M)&\leq \Pb(G_M) \leq \Pb(L_{(0,0)\to E_N(w)}\leq a_0 N + a_1 s N^{1/3})\\
&\leq C e^{-c |s|^{3/2}}\leq C e^{-c M^2/8},
\end{aligned}
\end{equation}
where we used the lower tail estimate of the point-to-point LPP from Proposition~\ref{PropBounds}.

Thus we consider below any $s\geq -\tfrac14 M^2$. Let us define a set of points $\widehat L$ and we say that
$\widehat L \prec \LL$ if each point in $\LL\cap \{A(v),|v|>M\}$ can be reached by an up-right paths from a point in $\widehat L$.
Then
\begin{equation}\label{eq4.27}
\begin{aligned}
\Pb(G_M\cap R_M) &\leq \Pb(R_M)\leq \Pb\big(\max_{|v|> M} L_{A(v)\to E_N(w)}>a_0 N -\tfrac14 a_1 M^2 N^{1/3}\big)\\
&\leq \Pb(L_{\widehat L\to E_N(w)}>a_0 N -\tfrac14 a_1 M^2 N^{1/3})+ \Pb(\widehat L\not\prec\LL).
\end{aligned}
\end{equation}
Our choice for $\widehat L$ will be such that $\Pb(\widehat L\not\prec\LL)\leq Q(M)$ for all $N$ large enough. To realize it, it is enough to take any $\widehat L$ such that it stays to the left of a parabola close enough to $\Lflat$. In Figure~\ref{FigLPPEstimate} we illustrate $\widehat L$.
\begin{figure}
\begin{center}
\psfrag{n}[lc]{$n$}
\psfrag{m}[lc]{$m$}
\psfrag{Lf}[cb]{$\Lflat$}
\psfrag{L}[cb]{$\LL$}
\psfrag{Ap}[rc]{$\widehat A(M)$}
\psfrag{Am}[rc]{$\widehat A(-M)$}
\psfrag{Apf}[lc]{$A^{\rm flat}(M)$}
\psfrag{Amf}[lc]{$A^{\rm flat}(-M)$}
\psfrag{Cp}[cc]{$\widehat C_-$}
\psfrag{Cm}[cc]{$\widehat C_+$}
\psfrag{E}[lc]{$E_N(w)$}
\includegraphics[height=7cm]{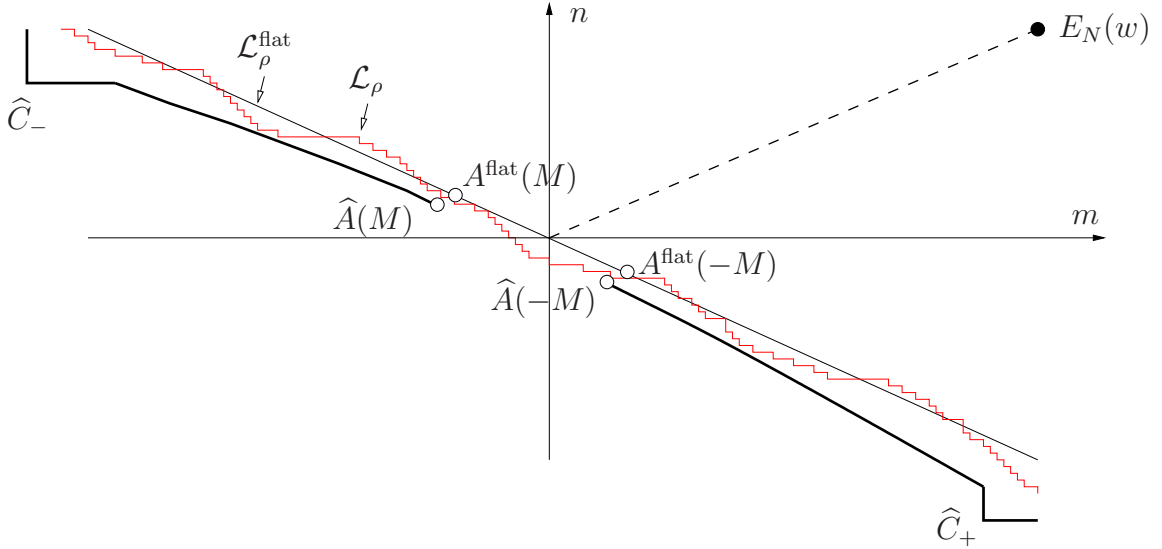}
\caption{The setting used to control the LPP outside the central part. The thick black line is $\widehat L$.}
\label{FigLPPEstimate}
\end{center}
\end{figure}
For a $\delta>0$, we define the points
\begin{equation}
\widehat A(v)=A^{\rm flat}(v)-\delta v^2 N^{1/3} \mathbf{e}_\rho,\quad \mathbf{e}_\rho=((1-\rho)^2,\rho^2),
\end{equation}
the segments ${\cal D}_k=\overline{\widehat A(kM) \widehat A((k+1)M)}$ and $\widetilde {\cal D}_{\ell}=\overline{\widehat A(-\ell M) \widehat A(-(\ell+1)M)}$, and the points $C_+=(-(1+\tfrac{1-\rho}{16}),\tfrac{\rho}{1-\rho}(1-\tfrac{\rho}{16}))N$ and $C_-=(\tfrac{1-\rho}{\rho}(1-\tfrac{1-\rho}{16}),-(1-\tfrac{\rho}{16}))N$. Then, we define
\begin{equation}
\widehat L = C_+ \cup C_- \bigcup_{|v|\geq N^{\nu/3}}{\widehat A(v)}\bigcup_{k=1}^{N^{\nu/3}} {\cal D}_k \bigcup_{\ell=1}^{N^{\nu/3}} \widetilde {\cal D}_{\ell},
\end{equation}
with $\nu\in (0,1/2)$ ($\nu<1/2$ is needed only in the last estimate of this lemma), and the union $A(v)$ is for $v$ up to the $v$ such that $A(v)$ is  reachable by an up-right path from $C_+$ or $C_-$ (there are $\Or(N^{1/3})$ of such $v$). The constant $\delta$ is now chosen small enough such that taking $v_+=\frac{\chi^{1/3}}{2(1-\rho)} N^{1/3}$, which corresponds to $A^{\rm flat}(v_+)=(-N,\tfrac{\rho}{1-\rho}N)$, then $C_+\prec \widehat A(v_+)$, and similarly for side close to $C_-$.

With the $\widehat L$ defined as above, we can apply Assumption~B to bound $\Pb(\widehat L\not\prec\LL)$. It thus remains to get a bound for $\Pb(L_{\widehat L\to E_N(w)}>a_0 N -\tfrac14 a_1 M^2 N^{1/3})$. This can be bounded by
\begin{equation}
\begin{aligned}
&\Pb(L_{C_+\to E_N(w)}>a_0 N -\tfrac{a_1 M^2}{4} N^{1/3}) +\sum_{k=1}^{N^{\nu/3}} \Pb(L_{{\cal D}_k\to E_N(w)}>a_0 N -\tfrac{a_1 M^2}{4} N^{1/3})\\
&+ \Pb(L_{C_-\to E_N(w)}>a_0 N -\tfrac{a_1 M^2}{4} N^{1/3}) +\sum_{\ell=1}^{N^{\nu/3}} \Pb(L_{\widetilde {\cal D}_\ell\to E_N(w)}>a_0 N -\tfrac{a_1 M^2}{4} N^{1/3})\\
&+\sum_{N^{\nu/3}\leq |v|\leq \Or(N^{1/3})}\Pb(L_{\widehat A(v)\to E_N(w)}>a_0 N -\tfrac{a_1 M^2}{4} N^{1/3}).
\end{aligned}
\end{equation}

For the point-to-point estimates we can use the bounds of Proposition~\ref{PropBounds}, which are uniform for the slopes $\eta$ in a bounded set of $(0,\infty)$. To avoid slopes which are close to $0$ or $\infty$, we need to restrict the use of the point-to-point estimates for the LPP from $\widehat A(v)$ and add the LPP from the starting points $C_\pm$ as well.

\smallskip\emph{1st bound.} The points $C_\pm$ are chosen such that from the law of large numbers approximation of $L_{C_\pm \to E_N(w)}$ is less then $a_0 N-N/2$ for any $\rho \in (0,1)$. This means that a deviation of $-\tfrac{a_1 M^2}{4} N^{1/3}$ from $a_0 N$ of $L_{C_+\to E_N(w)}$ corresponds to look at the right tail at a value at least $N/2-\Or(M^2N^{1/3})$. Thus for any given $M$, for all $N$ large enough, Proposition~\ref{PropBounds} implies
\begin{equation}\label{eq4.30}
\Pb(L_{C_+\to E_N(w)}>a_0 N -\tfrac{a_1 M^2}{4} N^{1/3}) \leq C e^{-c N^{2/3}}
\end{equation}
for some constants $C,c$ which depend only on $\rho$. Similarly one has the estimate for $\Pb(L_{C_-\to E_N(w)}>a_0 N -\tfrac14 a_1 M^2 N^{1/3})$.

\smallskip\emph{2nd bound.} In a similar way, using the bound of Proposition~\ref{PropBounds}, for any $N$ large enough,
\begin{equation}
\Pb(L_{\widehat A(v)\to E_N(w)}>a_0 N -\tfrac14 a_1 M^2 N^{1/3}) \leq C e^{-c N^{2\nu/3}}
\end{equation}
for any $v\in [N^{\nu/3},\Or(N^{1/3})]$, and thus
\begin{equation}
\sum_{N^{\nu/3}\leq |v|\leq \Or(N^{1/3})}\Pb(L_{\widehat A(v)\to E_N(w)}>a_0 N -\tfrac14 a_1 M^2 N^{1/3}) \leq C N^{1/3} e^{-c N^{2\nu/3}}\leq C e^{-\tfrac12 c N^{2\nu/3}}
\end{equation}
for $N\gg 1$.

\smallskip\emph{3rd bound.} Finally we need a bound for $\Pb(L_{{\cal D}_k\to E_N(w)}>a_0 N -\tfrac14 a_1 M^2 N^{1/3})$ uniform in $N$, which is summable in $k$ and such that its sum is going to zero as $M\to\infty$. The bound for $\Pb(L_{\widetilde {\cal D}_\ell\to E_N(w)}>a_0 N -\tfrac14 a_1 M^2 N^{1/3})$ is completely analogue and thus we present in details only the first one.

For a given $v$, we define the point $\widehat D(v)$ such that its second coordinate equals the one of $\widehat A(kM)$ and the segment $\overline{\widehat D(v),\widehat A(v)}$ has direction $\mathbf{e}_\rho$. We have
\begin{equation}
\widehat D(v)=A^{\rm flat}(v)-\theta \mathbf{e}_\rho,\quad \theta=\delta (kM)^2 N^{1/3}+\frac{2 (v-kM)N^{2/3}}{\rho\chi^{1/3}}.
\end{equation}

Then, for any $k\geq 1$ and $M$,
\begin{equation}\label{eq4.35}
\begin{aligned}
&\Pb\left(L_{{\cal D}_k\to E_N(w)}>a_0 N -\tfrac{a_1 M^2}{4} N^{1/3}\right) \leq \Pb\left(L_{\widehat A(kM)\to E_N(w)}>a_0 N -\tfrac{3a_1 k^2 M^2}{4} N^{1/3}\right)\\
&+\Pb\left(\max_{kM\leq v\leq (k+1)M} \{L_{\widehat A(v)\to E_N(w)}-L_{\widehat D(v)\to E_N(w)}+\beta N^{2/3}\}\geq \tfrac{a_1 k^2 M^2}{4} N^{1/3}\right)\\
&+\Pb\left(\max_{kM\leq v\leq (k+1)M} \{L_{\widehat D(v)\to E_N(w)}-L_{\widehat A(kM)\to E_N(w)}-\beta N^{2/3}\}\geq \tfrac{a_1 k^2 M^2}{4} N^{1/3}\right),
\end{aligned}
\end{equation}
where $\beta=\tfrac{2(v-kM)}{\rho\chi^{1/3}}-\delta (v^2-(kM)^2)N^{-1/3}$ (which is positive for all $N$ large enough, since $v\in [kM,(k+1)M]$ with $k\in [1,\Or(N^{\nu/3})]$).

\smallskip\emph{Bound on first term of (\ref{eq4.35}).} The law of large numbers estimate of $L_{\widehat A(kM)\to E_N(w)}$ is $a_0 N+N^{1/3}(\delta (kM)^2-a_1(kM-w)^2)$. Thus for any $\delta<\chi^{2/3}/8$ and $M$ large enough, we can use again the point-to-point estimate and obtain
\begin{equation}\label{eq4.36}
\Pb\left(L_{\widehat A(kM)\to E_N(w)}>a_0 N -\tfrac34 a_1 k^2 M^2 N^{1/3}\right) \leq C e^{-c k^2 M^2/8}.
\end{equation}

\smallskip\emph{Bound on second term of (\ref{eq4.35}).} Using $L_{\widehat D(v)\to E_N(w)}\geq L_{\widehat D(v)\to \widehat A(v)}+L_{\widehat A(v)\to E_N(w)}$ we have
\begin{equation}\label{eq4.37}
\begin{aligned}
&\Pb\left(\max_{kM\leq v\leq (k+1)M} \{L_{\widehat A(v)\to E_N(w)}-L_{\widehat D(v)\to E_N(w)}+\beta N^{2/3}\}\geq \tfrac{a_1 k^2 M^2}{4} N^{1/3}\right) \\
&\leq \sum_{kM\leq v\leq (k+1)M} \Pb\left(L_{\widehat A(v)\to E_N(w)}-L_{\widehat D(v)\to E_N(w)}+\beta N^{2/3}\geq \tfrac{a_1 k^2 M^2}{4} N^{1/3}\right)\\
&\leq \sum_{kM\leq v\leq (k+1)M} \Pb\left(L_{\widehat D(v)\to \widehat A(v)}-\beta N^{2/3}\leq -\tfrac{a_1 k^2 M^2}{4} N^{1/3}\right).
\end{aligned}
\end{equation}
Since $L_{\widehat D(v)\to \widehat A(v)}$ centered by $\beta N^{2/3}$ and scaled by $\Or(N^{2/9})$ converges to a $F_{\rm GUE}$ distributed random variable, by the lower tail estimate of Proposition~\ref{PropBounds} we get
\begin{equation}
\Pb(L_{\widehat D(v)\to \widehat A(v)}-\beta N^{2/3}\leq -a_1 k M N^{1/3})\leq C e^{-c k^2 M^2 N^{1/9}}
\end{equation}
for some constants $C,c$ which can be taken independent of $v\in [kM,(k+1)M]$. Since the sum in (\ref{eq4.37}) is over a number of terms $\Or(N^{2/3})$ we get
\begin{equation}\label{eq4.39}
(\ref{eq4.37})\leq C e^{-\frac12 c k^2 M^2 N^{1/9}}
\end{equation}
for all $N$ large enough.

\smallskip\emph{Bound on third term of (\ref{eq4.35}).} For this bound we will employ, between other results, Lemma~\ref{lemmaBoundsRescaledProcesses}. Let us first reformulate what we need to prove in terms of $L^{\rm resc,h}_n$. One looks the picture from the point $E_N(w)$, which becomes the origin. The point $\widehat A(kM)$ as seen from $E_N(w)$ becomes the point $(\gamma^2 n,n)$ and the point $\widehat D(v)$ is $(\gamma^2 n+\beta_1 u(v) n^{2/3},n)$. This means that we need to take
\begin{equation}
\begin{aligned}
&n=\frac{\rho}{1-\rho} N-\frac{2\rho(kM-w)}{\chi^{1/3}}N^{2/3}+\delta \rho^2 (kM)^2 N^{1/3},\\
&\gamma=\frac{1-\rho}{\rho}\Big(1+\frac{kM-w}{\chi^{1/3}}N^{-1/3}+\frac{(kM-w)^2 (3-4\rho)}{2\chi^{2/3}}N^{-2/3}+\Or(N^{-1})\Big),\\
&u(v)=(v-kM)(1+\Or(N^{-2/3})).
\end{aligned}
\end{equation}
We have, in distribution,
\begin{equation}
L_{\widehat D(v)\to E_N(w)}\stackrel{d}{=}L_{(0,0)\to(\gamma^2 n+\beta_1 u(v) n^{2/3},n)}.
\end{equation}
Recall that $\widehat D(kM)=\widehat A(kM)$. Furthermore, the difference between the laws of large numbers of $L_{\widehat D(v)\to E_N(w)}$ and $L_{\widehat A(kM)\to E_N(w)}$ is given by
\begin{equation}
\begin{aligned}
&\beta N^{2/3} - \chi^{-2/3}N^{1/3}\Big[(v-kM)^2(1+\delta \chi^{2/3})+(v-kM)(2w+2kM(1+\delta \chi^{2/3}))\Big]\\
\leq &\beta N^{2/3}-\chi^{-2/3}N^{1/3} u(v)^2(1+\Or(N^{-2/3})),
\end{aligned}
\end{equation}
for all $M$ large enough.

As a consequence, the third term of (\ref{eq4.35}) can be rewritten as
\begin{equation}\label{eq4.40}
\Pb\left(\max_{kM\leq v\leq (k+1)M}\{ L^{\rm resc,h}_n(u(v))-L^{\rm resc,h}_n(0)-u(v)^2 +\Or(n^{-2/3})\}\geq \tfrac14 k^2 M^2\right).
\end{equation}
Applying the upper bound of Lemma~\ref{lemmaBoundsRescaledProcesses} we obtain
\begin{equation}\label{eq4.41}
\begin{aligned}
(\ref{eq4.40})&\leq \Pb(Z^{\rho_+}(\gamma^2 n,n)<0)\\
& + \Pb\left(\max_{u\in I_M} \{B^{\rho_+}_n(u(v))+2\beta_2\kappa u(v)+\Or(n^{-2/3})\}\geq  \tfrac14 k^2 M^2\right),
\end{aligned}
\end{equation}
where $I_M=[0,M(1+\Or(n^{-2/3}))]$. With the choice $\kappa=\e_0 k M$ and, taking $M$ large enough so that we get to use Lemma~\ref{lemmaExitPoint}, we have
\begin{equation}\label{eq4.42}
(\ref{eq4.41})=C e^{-c \e_0^2 k^2 M^2}+\Pb\left(\max_{u\in I_M} \{B^{\rho_+}_n(u)+2\beta_2\kappa u+\Or(n^{-2/3})\}\geq  \tfrac14 k^2 M^2\right)
\end{equation}
We choose $\e_0$ small enough such that for any $M,k\geq 1$, $\max_{u\in I_M}2\beta_2\kappa u+\Or(n^{-2/3})$ is bounded by $\tfrac18 k^2 M^2$ (uniformly for large $n$). Then
\begin{equation}
(\ref{eq4.42})\leq C e^{-c \e_0^2 k^2 M^2} +\Pb\left(\max_{u\in I_M} B^{\rho_+}_n(u)\geq  \tfrac18 k^2 M^2\right).
\end{equation}
In the stationary setting, recall that we defined $\rho_0=\rho_0(\gamma):=1/(1+\gamma)$. By stationarity
\begin{equation}
B^{\rho_+}_n(u)=\frac{1}{\beta_2 n^{1/3}}\sum_{m=1}^{\beta_1 u n^{2/3}} (X_m-(1-\rho_+)^{-1}),
\end{equation}
where $X_1,X_2,\ldots$ are i.i.d.\ random variables ${\rm Exp}(1-\rho_+)$ with $\rho_+=\rho_0+\e_0 k M n^{-1/3}$. Denote by $Y_m=X_m-(1-\rho_+)^{-1}$. Then $T\mapsto Z_T=\sum_{m=1}^T Y_m$ is a martingale. Using the generic maximal inequality for martingale $\Pb(\max_{1\leq t\leq T} Z_t\geq S)\leq \frac{\E(f(Z_T))}{f(S))}$ with $f(x)=e^{\lambda x}$, $\lambda>0$, we have
\begin{equation}\label{eq4.44}
\Pb\left(\max_{u\in I_M} B^{\rho_+}_n(u)\geq  \tfrac18 k^2 M^2\right) \leq \min_{\lambda>0} \frac{(\E(e^{\lambda Y_1}))^T}{e^{\lambda S}} = e^{-S(1-\rho_+)+T\ln[1+(1-\rho_+) S/T]},
\end{equation}
with $S=\tfrac18 k^2 M^2\beta_2 n^{1/3}$ and $T=\beta_1 u(M) n^{2/3}=2M\beta_1 n^{2/3}(1+\Or(n^{-1/3}))$. A computation then leads to
\begin{equation}\label{eq4.45}
(\ref{eq4.44})= \exp\left(-\frac{k^4 M^3}{512}(1+\Or(k^2 n^{-1/3}))\right).
\end{equation}
Remember that the range of $k$ is from $1$ to $\Or(n^{\nu/3})$. Thus the error term is in the worst case $\Or(n^{(2\nu-1)/3})$. Therefore we can now set the value of $\nu$ to be any number in $(0,1/2)$, e.g., $\nu=1/3$. With this choice, for $n$ large enough, the error term is not larger than $1$ and thus for any $k,M$,
\begin{equation}\label{eq4.46}
(\ref{eq4.45})\leq \exp(-c k^2 M^2).
\end{equation}

Summing up the estimates we have
\begin{equation}
\sum_{k\geq 1} \Pb\left(L_{{\cal D}_k\to E_N(w)}>a_0 N -\tfrac{a_1 M^2}{4} N^{1/3}\right)\leq \sum_{k\geq 1}\Big( (\ref{eq4.36})+(\ref{eq4.39}) + (\ref{eq4.46}) \Big)\leq C e^{-c M^2}
\end{equation}
for all $N$ large enough. Here the constants $C,c$ are uniform in $N$ and $M$.

Finally we need to prove (\ref{eq4.17}). Notice that for flat initial condition we have $Q=0$ and thus
\begin{equation}
\begin{aligned}
&\Pb(\textrm{the LPP maximizer starts from }A^{\rm flat}(v)\textrm{ with }|v|\leq M)\\
=& \Pb\Big(\max_{|v|\leq M} L_{A(v)\to E_N(w)} >\max_{|v|> M} L_{A(v)\to E_N(w)}\Big)\geq \Pb(G_M^c\cap R_M^c)\\
\geq & 1-\Pb(G_M) - \Pb(R_M),
\end{aligned}
\end{equation}
for any choice of $s$. With the choice $s=-M^2/4$, the bounds obtained above lead to the claimed result.
\end{proof}

\appendix
\section{Bounds on point-to-point LPP}
In the proof we use known results for the point-to-point LPP with exponential random variables, which we recall here.
\begin{prop}\label{PropBounds}
For $\eta\in(0,\infty)$ define $\mu=(\sqrt{\eta \ell}+\sqrt{\ell})^2$, $\sigma=\eta^{-1/6}(1+\sqrt{\eta})^{4/3}$, and the rescaled random variable
\begin{equation}
L^{\rm res}_\ell:=\frac{L_{(0,0)\to(\eta\ell,\ell)}-\mu}{\sigma\ell^{1/3}}.
\end{equation}
(a) Limit law
\begin{equation}
\lim_{\ell\to\infty} \Pb(L^{\rm res}_\ell\leq s) = F_{\rm GUE}(s),
\end{equation}
with $F_{\rm GUE}$ the GUE Tracy-Widom distribution function.\\
(b) Bound on upper tail: there exist constants $s_0,\ell_0,C,c$ such that
\begin{equation}
\Pb(L^{\rm res}_\ell\geq s)\leq C e^{-c s}
\end{equation}
for all $\ell\geq \ell_0$ and $s\geq s_0$.\\
(c) Bound on lower tail: there exist constants $s_0,\ell_0,C,c$ such that
\begin{equation}
\Pb(L^{\rm res}_\ell\leq s)\leq C e^{-c |s|^{3/2}}
\end{equation}
for all $\ell\geq \ell_0$ and $s\leq -s_0$.
\end{prop}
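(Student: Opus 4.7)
The plan is to invoke the classical exact-solvability of exponential last passage percolation via the RSK correspondence. For $\eta\in(0,\infty)$ and $M=\eta\ell, N=\ell$ (WLOG $M\geq N$), $L_{(0,0)\to(M,N)}$ has the same distribution as the largest eigenvalue of an $N\times N$ Wishart matrix with $M$ degrees of freedom, and so admits the Fredholm determinant representation
\begin{equation}
\Pb\left(L_{(0,0)\to(M,N)}\leq r\right) = \det\left(\Id - K_{M,N}\right)_{L^2(r,\infty)},
\end{equation}
where $K_{M,N}$ is the correlation kernel of the Laguerre Unitary Ensemble, expressible through Laguerre polynomials. All three statements then reduce to asymptotic analysis of this kernel, uniformly in $\ell$.

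For part (a), I would carry out the standard steepest-descent analysis on the integral representation of $K_{M,N}$. Centering by $\mu = (\sqrt{\eta\ell}+\sqrt{\ell})^2$ (the ``soft edge'' of the Marchenko-Pastur law) and rescaling by $\sigma \ell^{1/3}$, the critical points of the exponent coalesce at a cubic saddle, and the rescaled kernel converges pointwise (in fact in trace norm on $L^2(s,\infty)$) to the Airy kernel $K_{\rm Ai}$. This is precisely Johansson's Theorem~1.6 in \cite{Jo00b}, and gives $\lim_{\ell\to\infty}\Pb(L^{\rm res}_\ell\leq s) = \det(\Id-K_{\rm Ai})_{L^2(s,\infty)} = F_{\rm GUE}(s)$.

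For part (b), the upper tail, I would use the elementary gap-probability bound $\Pb(L^{\rm res}_\ell\geq s)\leq \int_s^\infty K^{\rm res}_{M,N}(u,u)\,du$ (trace-class / first-order term of the Fredholm expansion), and reduce to the uniform estimate $K^{\rm res}_{M,N}(u,u)\leq C e^{-c u}$ for $u\geq s_0$, valid for all $\ell\geq \ell_0$. This in turn is a consequence of the same steepest-descent analysis: for $u$ large the saddle contour picks up exponential decay $e^{-cu}$ from the Airy-like exponential, matching the bound $\Ai(x)\sim e^{-\tfrac{2}{3}x^{3/2}}$ at infinity. The required uniformity in $\ell$ follows from controlling the remainder in the saddle analysis.

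For part (c), the lower tail, the $|s|^{3/2}$ decay reflects the Coulomb gas energy cost of pushing the entire spectrum below $\mu+s\sigma\ell^{1/3}$. The cleanest route is Ledoux-Rider type estimates adapted to LUE, or equivalently, a direct bound on the Fredholm determinant: one shows $\det(\Id-K^{\rm res}_{M,N})_{L^2(s,\infty)}\leq e^{-c|s|^{3/2}}$ uniformly in $\ell$, by splitting the operator near and far from the edge and using the cubic exponent gain in the saddle analysis on the near-edge piece. An alternative (used e.g.\ in \cite{BFP09}) is to exploit the superadditivity $L_{(0,0)\to(M,N)}\geq L_{(0,0)\to(M',N')}+L_{(M',N')\to(M,N)}$ to bootstrap from point-wise one-point bounds.

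The main obstacle, and the reason part (c) requires real work beyond part (a), is obtaining the tail bounds \emph{uniformly} in $\ell$ (and, for the applications in this paper, uniformly in $\eta$ on compact subsets of $(0,\infty)$). The pointwise convergence of kernels in part (a) is not enough; one needs quantitative control of the Riemann-Hilbert / steepest-descent error terms. These uniform estimates are established in the literature (\cite{BFP09} and references therein for exponential LPP), and the proposition is a convenient repackaging of those results.
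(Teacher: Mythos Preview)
Your proposal is correct and matches the paper's own treatment: the proposition is stated as a collection of known results, with (a) attributed to Johansson~\cite{Jo00b}, (b) obtained from exponential decay of the Laguerre/Fredholm kernel (the paper cites~\cite{FN13,BFP12}), and (c) taken from~\cite{BFP12} (rather than~\cite{BFP09}). The only quibble is your superadditivity remark for (c): subadditivity of the passage time gives lower bounds on $L$ and hence helps with the \emph{upper} tail, not the lower tail; the $|s|^{3/2}$ lower-tail rate genuinely requires the Fredholm/Riemann--Hilbert analysis you outline first.
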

The constants $C,c$ can be chosen uniformly for $\eta$ in a bounded set. (a) was proven in Theorem~1.6 of\cite{Jo00b}. Using the relation with the Laguerre ensemble of random matrices (Proposition~6.1 of~\cite{BBP06}), or to TASEP described above, the distribution is given by a Fredholm determinant. An exponential decay of its kernel leads directly to (b). See e.g.\ Proposition~4.2 of~\cite{FN13} or Lemma~1 of~\cite{BFP12} for an explicit statement. (c) was proven in~\cite{BFP12} (Proposition~3 together with (56)). In the present language it is reported in Proposition~4.3 of~\cite{FN13} as well.

\section{One-point slow-decorrelation theorem}
Here we state one-point slow-decorrelation theorem in the setting of point-to-point LPP with homogeneous waiting times, since it is what we employ in our paper. The statement of Theorem~2.1 in~\cite{CFP10b} is for more generic LPP problems. The application to finitely many points is straightforward using union bound and it was already used for instance in~\cite{CFP10a,BFP09}.

\begin{thm}[One-point slow-decorrelation]\label{ThmSlowDecOnePt}
Let $p\in\R_+^2$ be a direction. Assume that there exist constants is a $\mu=\mu(p)$, a distribution $D$, an $\alpha\in(0,1)$ and $\nu\in (0,1)$, such that
\begin{equation}
\frac{L_{(0,0)\to [p \ell]}-\mu \ell}{\ell^{\alpha}}\Rightarrow D,\textrm{ as }t\textrm{ goes to infinity}.
\end{equation}
Then, for any $\e>0$,
\begin{equation}
\lim_{\ell\to\infty}\Pb\left(|L_{(0,0)\to [p (\ell+ \ell^\nu)]}-L_{(0,0)\to [p \ell]}-\mu \ell^\nu|\geq \e\ell^\alpha\right)=0.
\end{equation}
\end{thm}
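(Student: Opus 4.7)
The plan is to combine a deterministic one-sided bound from super-additivity of point-to-point LPP with the fact that both $L_{(0,0)\to [p\ell]}$ and $L_{(0,0)\to[p(\ell+\ell^\nu)]}$ have the same weak limit $D$. I will set
\begin{equation*}
X_\ell:=\frac{L_{(0,0)\to[p\ell]}-\mu\ell}{\ell^{\alpha}},\qquad Y_\ell:=\frac{L_{(0,0)\to[p(\ell+\ell^\nu)]}-\mu(\ell+\ell^\nu)}{(\ell+\ell^\nu)^{\alpha}},
\end{equation*}
both of which converge weakly to $D$ by hypothesis. Since $((\ell+\ell^\nu)/\ell)^\alpha=1+O(\ell^{\nu-1})$, the quantity $[L_{(0,0)\to[p(\ell+\ell^\nu)]}-L_{(0,0)\to[p\ell]}-\mu\ell^\nu]/\ell^\alpha$ differs from $Y_\ell-X_\ell$ by a term going to $0$ in probability, so it suffices to prove $Y_\ell-X_\ell\to 0$ in probability.

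The first step is a deterministic one-sided inequality. Since an up-right path from $(0,0)$ to $[p(\ell+\ell^\nu)]$ can be routed through $[p\ell]$, super-additivity gives
\begin{equation*}
L_{(0,0)\to[p(\ell+\ell^\nu)]}\;\geq\;L_{(0,0)\to[p\ell]}+L_{[p\ell]\to[p(\ell+\ell^\nu)]},
\end{equation*}
with the two summands on the right supported on disjoint sets of lattice points, hence independent. Translation invariance of the i.i.d.\ environment identifies $L_{[p\ell]\to[p(\ell+\ell^\nu)]}$ in distribution with $L_{(0,0)\to[p\ell^\nu]}$ up to an $O(1)$ integer-part correction, and applying the theorem's hypothesis with $\ell$ replaced by $\ell^\nu$ gives $L_{(0,0)\to[p\ell^\nu]}-\mu\ell^\nu=O_P((\ell^\nu)^\alpha)=o_P(\ell^\alpha)$ because $\nu<1$. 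After rescaling, this yields a random lower bound $Y_\ell-X_\ell\geq\epsilon_\ell$ with $\epsilon_\ell\to 0$ in probability, so $(Y_\ell-X_\ell)^-\to 0$ in probability.

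The second step upgrades this one-sided control to two-sided convergence using the common marginal limit. The pair $(X_\ell,Y_\ell)$ is tight, so along any subsequence we may assume $(X_\ell,Y_\ell)\Rightarrow(X_*,Y_*)$ with $X_*,Y_*\sim D$ and $Y_*\geq X_*$ almost surely. For any strictly increasing bounded $\phi:\R\to\R$ one has $\phi(Y_*)\geq\phi(X_*)$ pointwise while $\E[\phi(Y_*)]=\E[\phi(X_*)]$, which forces $\phi(Y_*)=\phi(X_*)$ a.s.\ and hence $Y_*=X_*$ a.s. Since every subsequential joint limit satisfies this, $Y_\ell-X_\ell\to 0$ in probability, which concludes the proof.

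I expect the main conceptual hurdle to be this last step: converting a purely deterministic one-sided inequality plus equality of limiting marginals into vanishing of the difference in probability. This is the probabilistic incarnation of ``slow decorrelation'' along the characteristic direction. The remaining ingredients are standard; the only inputs used beyond the theorem's hypothesis are super-additivity of LPP and translation invariance of the i.i.d.\ weights.
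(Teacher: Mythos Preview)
Your proposal is correct and is essentially the standard argument. The paper does not supply its own proof of this statement but quotes it from~\cite{CFP10b}, where the proof proceeds exactly as you outline: super-additivity gives the one-sided inequality, the increment term is $o_P(\ell^{\alpha})$ because $\nu\alpha<\alpha$, and then the equality of the marginal limits combined with the one-sided bound forces $Y_\ell-X_\ell\to 0$ in probability (in~\cite{CFP10b} the last step is packaged as a short lemma rather than your subsequential-limit formulation, but the content is identical).
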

The assumptions for the model considered in this paper are satisfied with \mbox{$p=(\eta,1)$}, $\mu=(1+\sqrt{\eta})^2$, $\alpha=1/3$, and $D$ is $F_{\rm GUE}$ (up to a scaling), see Proposition~\ref{PropBounds}.


\end{document}